\newcommand{\beqa}{\begin{eqnarray*}}
\newcommand{\eeqa}{\end{eqnarray*}}
\newcommand{\beqn}{\begin{eqnarray}}
\newcommand{\eeqn}{\end{eqnarray}}
\newcommand{\N}{\mathbb N}
\newcommand{\mcH}{\mathcal H}
\newcommand{\mcR}{\mathcal R}
\newcommand{\mcM}{\mathcal M}
\newcommand{\la}{\lambda}
\newcommand{\m}{\mu}
\newcounter{cnt1}
\newcounter{cnt2}
\newcounter{cnt3}
\newcommand{\blr}{\begin{list}{$($\roman{cnt1}$)$}
 {\usecounter{cnt1} \setlength{\topsep}{0pt}
 \setlength{\itemsep}{0pt}}}
\newcommand{\bla}{\begin{list}{$($\alph{cnt2}$)$}
 {\usecounter{cnt2} \setlength{\topsep}{0pt}
 \setlength{\itemsep}{0pt}}}
\newcommand{\bln}{\begin{list}{$($\arabic{cnt3}$)$}
 {\usecounter{cnt3} \setlength{\topsep}{0pt}
 \setlength{\itemsep}{0pt}}}
\newcommand{\el}{\end{list}}
\newtheorem{thm}{Theorem}[section]
\newtheorem{cor}[thm]{Corollary}
\newtheorem{ex}[thm]{Example}
\newtheorem{Def}[thm]{Definition}
\newtheorem{prop}[thm]{Proposition}
\newtheorem{rem}[thm]{Remark}
\newcommand{\Rem}{\begin{rem} \rm}
\newcommand{\bdfn}{\begin{Def} \rm}
\newcommand{\edfn}{\end{Def}}
\newcommand{\ba}{\begin{array}}
\newcommand{\ea}{\end{array}}
\begin{document}
\begin{center}\large{{\bf Kluv\'{a}nek-Lewis-Henstock integral in a Banach space}} \\

 
  Hemanta Kalita$^1$ and Bipan Hazarika$^{2, \ast}$\\
  
 $^1$Department of Mathematics, Gauhati University, Guwahati 781014, Assam, India\\
Email:  
hemanta30kalita@gmail.com; hk\_gu@gauhati.ac.in\\
$^{2}$Department of Mathematics, Gauhati University, Guwahati 781014, Assam, India\\
Email:  
bh\_rgu@yahoo.co.in; bh\_gu@gauhati.ac.in
\end{center}
\title{}
\author{}
\thanks{$^{\ast}$The corresponding author}
\thanks{\today} 

\begin{abstract}  We investigate some properties and convergence theorem  of  Kluv\'{a}nek-Lewis-Henstock $\m-$integrability for $\m-$measurable functions that we introduced in \cite{ABH}. We give a $\m-$a.e. convergence version of Dominated (resp. Bounded) Convergence Theorem for $\m.$ We introduce  Kluv\'{a}nek-Lewis-Henstock integrable of scalar-valued functions with respect to a set valued measure in a Banach space. Finally we introduce $(KL)-$type Dominated Convergence Theorem for the set-valued  Kluv\'{a}nek-Lewis-Henstock integral.
\\
\noindent{\footnotesize {\bf{Keywords and phrases:}}}  Dominated convergence theorem; Bounded convergence theorem; $KL-$type dominated convergence theorem; Set valued measure.\\
{\footnotesize {\bf{AMS subject classification \textrm{(2020)}:}}} Primary 28B05,
46G10; Secondary 46B03, 46B25, 46B40.
\end{abstract}
\maketitle

\maketitle

\pagestyle{myheadings}
\markboth{\rightline {\scriptsize   Kalita, Hazarika }}
        {\leftline{\scriptsize  Kluv\'{a}nek-Lewis-Henstock integral \dots }}

\maketitle
\section{Introduction}
In 1955, Bartle, Dunford, and Schwartz  developed a theory of integration for scalar functions with respect to a $\sigma-$additive Banach-space-valued vector measure $\mu$ defined on a $\sigma-$algebra of sets and used it to give an integral representation for weakly compact operators $u : C(S) \to  X,$ where $S$ is a compact Hausdorff space and $X$ is a Banach space (see \cite{BDS}). About fifteen years later,  Lewis \cite{DR,DR1} studied a Pettis type weak integral of scalar functions with respect to a $\sigma$-additive vector measure $\mu$ having range in an locally convex Hausdorff  $X.$  This type of definition has also been considered by Kluv\'{a}nek in \cite{KK1}. In
honor of these mathematicians in the literature  the integral introduced in \cite{DR, DR1} as well as
its variants is called Kluv\'{a}nek-Lewis integral or briefly, the $(KL)-$integral. \cite[Theorem 2.4]{DR1}  asserts that the $(KL)-$integral is the same as the $(BDS)-$integral (see \cite{KK1,KK} for detailed about $(KL)-$integral). In 1960s, both Henstock and Kurzweil independently developed the so called
Henstock-Kurzweil (HK) integral, an integral able to integrate a much larger class of functions than the Riemann integral and the Lebesgue integral, compatible with the Fundamental Theorem of Calculus. The Henstock-Kurzweil integral provides a tool for integrating highly oscillatory functions which occur in nonlinear analysis and in quantum theory. It is also easy
to understand because its definition requires no measure theory. Moreover, all Bochner integrable (in real-valued case Lebesgue integrable) functions are Henstock-Kurzweil (shortly HK) integrable, but not conversely. The Henstock-Kurzweil (in short, $(HK)$)-integral  for real-valued functions, defined on abstract sets, with respect to (possibly infinite) non-negative measures, readers can see 
\cite{BCSHENSTOCK,BRV,CAO, AB,FM,GORDON,LEE,PFEFFER,DPHENSTOCK,DPM,RIECAN}
and the references among them.  Very recently, Boccuto et al. \cite{ABH} introduced the Kluv\'anek–Lewis ($(KL)-$)integral for Banach valued spaces. In \cite{Gould} Gould introduced the integral over vector-valued measures. Precupanu and Croitoru  \cite{AM,AM1} was generalized the gould type integral for Banach space valued functions over multimeasures. Gavrilu\c{t} \cite{Gavrilut} investigated a gould type integral with respect to a multisubmeasure. Recently, Candeloro et al. \cite{CDPMS0,CDPMS1} studied about integration theory of multifunctions. It has an important
role in applications to several branches of science, like for instance control theory, differential inclusions, game theory, aggregation functions, economics, problems of finding equilibria, and optimization. Since neither the Riemann
integral, nor the Lebesgue integral are completely satisfactory concerning the problem of the existence of primitives, different types of integrals extending the
previous ones have been introduced and investigated, like Henstock-Kurzweil, McShane, and Pettis integrals. These topics have many connections with measures taking values in abstract spaces, and in particular, the extension of the concept of integrability to set-valued functions can be used in order to obtain a larger number of selections for multifunctions, through their estimates and properties, in several applications (see \cite{CSSISY, CSPANAM}). In \cite{NS,Ns1} Papageorgious introduced the concept of multivalued integral over set-valued measure and the theory of set valued
functions and set valued measures. 
  Kandilakis \cite{Kandi} introduced an integral of bounded real-valued functions with respect to a set-valued measure using the set
of Kluv\'{a}nek-Knowles type integrals. Wu et al. \cite{W1} introduced the set-valued Bartle integral which is the set of Bartle-Dunford-Schwartz type integrals. The set-valued Bartle integral has also been deeply considered by Zhang et al. \cite{W} to introduced a multivalued gould type integral for a real-valued function with respect to a set-valued measure (see \cite{AM,AM1}). In \cite{Cai}, Cai Li Zhou and  Fu Gui Shi introduced and studied  a new set-valued integral of scalar-valued functions with respect to a set-valued measure. Unlike various ways of defining such an integral (e.g., Papageorgiou way, Kandilakis way,  Wu,
Zhang, and Wang way), their integral does not depend on measure selections of the given set-valued measure and is a Pettis type set-valued integral of scalar-valued functions with
respect to a set-valued measure. The result is an extension of
the Kluv\'{a}nek-Lewis type integral, which is a Pettis type weak integral of scalar-valued functions with respect to a vector-valued measure.\\

 In this paper we study some properties and convergence theorem  of  Kluv\'{a}nek-Lewis-Henstock $\m-$integrability for  $\m-$measurable functions. We introduce   Kluv\'{a}nek-Lewis-Henstock integrable of scalar-valued functions with respect to a set valued measure in a Banach space. Finally we introduce Lebesgue type dominated convergence theorem for the set-valued Kluv\'{a}nek-Lewis-Henstock integral.
\section{Preliminaries and Definitions}
Let $T \neq \emptyset$ be an abstract set, $P[T]$  be the class of all subsets of $T,~\Sigma \subset P[T]$ be a $\sigma-$algebra, $X$ be a Banach space, and $X^{\ast}$ be its topological dual. For each $ A \in \Sigma,$ let us denote by $\chi_A$  the \emph{characteristic function} of $A$ defined by \begin{eqnarray*}
\chi_A(t)=\left\{
\begin{array}{ll}
1 & \text{if  } t \in A, \\ \\
0 & \text{if  } t \in T \setminus A.
\end{array}\right. 
\end{eqnarray*}
Assume $T $ is a compact topological space and $\Sigma $ is the $\sigma-$algebra of all Borel subsets of $T.$ Let $\nu:\Sigma \to \mathbb{R}
\cup \{+ \infty\}
$ be a $\sigma$-additive non-negative measure. A \emph{vector measure} is a
$\sigma$-additive set function $\mu: \Sigma \to X.$ 
By the Orlicz-Pettis theorem (see also 
\cite[Corollary 1.4]{JD}), 
the $\sigma$-additivity of $\mu$ is equivalent to the
$\sigma$-additivity of 
the scalar-valued set function 
$x^{\ast} \mu : A \mapsto x^{\ast}(\mu(A))$ on 
$\Sigma $ for every $x^{\ast} \in X^{\ast}$.
For a literature on vector measures, see also 
\cite{CG,CURBERA,OP,JD,KA,KK,DR,OKADA,PANCHA,S} 
and the references therein. To make our presentation reasonably self-contained, we recalling few definitions and results in this section that we will use in our main section.

The \emph{variation} $|\mu|$ of $\mu$ is defined by 
\begin{eqnarray*}\label{variation}
|\mu|(A)= \sup \left\{
\sum_{i=1}^r \|\mu(A_i)\|: A_i \in \Sigma,
i=1,2, \ldots, r; 
A_i \cap A_j = \emptyset \text{  for  }
i \neq j ; \, \bigcup_{i=1}^r A_i \subset A
\right\}.
\end{eqnarray*}
We define the \emph{semivariation} $\|\mu\|$ of $\mu$ by
\begin{eqnarray}\label{domination}
\|\mu\|(A)= \sup_{x^{\ast} \in X^{\ast},
\|x^{\ast}\| \leq 1} |x^{\ast} \mu|(A).
\end{eqnarray}
\begin{rem}\label{finitesemivariation}
\rm Observe that $\|\mu\|(A) < + \infty$ for all 
$A \in \Sigma$ (see also \cite[Corollary 1.19]{JD}, 
\cite[\S1]{OKADA}).
\end{rem}
The \emph{completion} of $\Sigma$ with respect to
$\|\mu\|$ is defined by 
\begin{eqnarray}\label{completion}
\widetilde{\Sigma}= \{A= B \cup N: B \in \Sigma, 
N \subset M \in \Sigma \text{  with  } \|\mu\|(M)=0\}.
\end{eqnarray}
A function $f:T \to \mathbb{R}$ is said to be 
\emph{$\mu$-measurable} if 
\begin{eqnarray*}\label{mumeasurability}
f^{-1}(B) \cap \{ t \in T: f(t) \neq 0 \}\in 
\widetilde{\Sigma} 
\end{eqnarray*}
for each Borel subset $B \subset \mathbb{R}.$

Observe that from (\ref{domination}) and (\ref{completion})
it follows that every $\mu$-measurable real-valued
function is also $x^{\ast} \mu$-measurable 
for every $x^{\ast} \in X^{\ast}.$ 
Moreover, it is readily seen that every 
$\Sigma$-measurable real-valued function is also
$\mu$-measurable.  The complement of $\sigma(P)$ is $\overline{\sigma(P)} $ with respect to the semi variation $||\mu||.$  We recalling the definition of $\overline{\sigma(P)} $ as follows:
\begin{Def}
\cite[Definition 11]{DP2} 
$\overline{\sigma(P)}=\{E = F \cup N:~F \in \sigma(P),~N \subset M \in \sigma(P)~{\textit{with}}~||\mu||=0\}.$
\end{Def}
Recalling that $||\mu||$ is a $\sigma-$subadditive submeasure on $\sigma(P),$ then $\overline{\sigma(P)}$ is a $\sigma-$ring and that $\overline{\sigma(P)}$ contains $\sigma(P).$ 
We say that $\mu$ is \emph{$\Sigma$-separable} (or
\emph{separable}) if there is 
a countable family $\mathbb{B}=(B_k)_k$ in $\Sigma$ such that,
for each $A \in \Sigma$ and $\varepsilon > 0$, there is
$k_0 \in \mathbb{N}$ such that
\begin{eqnarray}\label{sep}
\|\mu\|(A \Delta B_{k_0})=
\sup_{x^{\ast} \in X^{\ast}, 
\|x^{\ast}\| \leq 1}
[\,|x^{\ast} \mu |(A \Delta B_{k_0}) ] \leq \varepsilon
\end{eqnarray}
(see also \cite{RICKER}). Such a family $\mathbb{B}$ is said 
to be \emph{$\mu$-dense}.

Observe that $\mu$ is $\Sigma$-separable if and only if 
$\Sigma$ is \emph{$\mu$-essentially countably generated},
namely there is a countably generated 
$\sigma$-algebra $\Sigma_0 \subset \Sigma$ such that
for each $A \in \Sigma$ there is $B \in \Sigma_0$ with
$\mu(A \Delta B)=0$. The separability of $\mu$ 
is satisfied, for instance, when 
$T$ is a separable metrizable space, $\Sigma$ is the
Borel $\sigma$-algebra of the Borel subsets of $T$, 
and $\mu$ is a Radon measure
(see also \cite[Theorem 4.13]{BREZIS}, 
\cite[Theorem 1.0]{DZAMONJA},
\cite[\S1.3 and \S2.6]{KK},
\cite[Propositions 1A and 3]{RICKER}). 
A \emph{decomposition} of a set 
$A \in \Sigma$ is a finite collection
$\{(A_1, \xi_1)$, $(A_2,\xi_2),
\ldots, (A_N, \xi_N)\}$ such that $
A_j \in \Sigma$ and $\xi_j \in A_j$ 
for every $j \in \{1$, $2, \ldots, N\},$
and $\nu(A_i \cap A_j)=0,$ whenever $i \neq
j$. A decomposition of subsets of $A \in \Sigma$ is called a \emph{partition} of $A$ when $\displaystyle{\bigcup_{j=1}^N A_j=A}.$ A \emph{gauge} on a set $A \in \Sigma$ is a map $\delta$
assigning to each point $x \in A$ a neighborhood $\delta(x)$ of
$x$. If ${\mathcal D}=\{(A_1, \xi_1)$, $(A_2,\xi_2),
\ldots, (A_N, \xi_N)\}$ is a
decomposition of $A$ and $\delta$ is a gauge on $A$, then we say that
${\mathcal D}$ is \emph{$\delta$-fine} if $A_j \subset \delta(\xi_j)$
for any $j \in \{ 1$, $2, \ldots, N \}.$

 Any gauge in $[a, + \infty]$ has the form
$\delta(x)=(x - d(x), x + d(x)),$ if $x \in [a, + \infty] \cap
\mathbb{R},$ and 
$\delta(+\infty)=(b, + \infty] = (b, + \infty) \cup
\{ + \infty \},$ where $d$ denotes a positive real-valued function
defined on $[a, + \infty).$ 
Now we define the \emph{Riemann sums} by
$\displaystyle{
S(f,{\mathcal D})= \sum_{j=1}^N \, f(\xi_j) \nu (A_j)}$ 
if the sum exists in $\mathbb{R},$ with the convention 
$0 \cdot (+\infty)=0.$
Note that for any
gauge $\delta$ 
there exists at least one $\delta$-fine partition
${\mathcal D}$ such that $S(f,{\mathcal D})$ is well-defined.
A \emph{vector measure} is a
$\sigma$-additive set function $\mu: \Sigma \to X.$ 
By the Orlicz-Pettis theorem (see also 
\cite[Corollary 1.4]{JD}), 
the $\sigma$-additivity of $\mu$ is equivalent to the
$\sigma$-additivity of 
the scalar-valued set function 
$x^{\ast} \mu : A \mapsto x^{\ast}(\mu(A))$ on 
$\Sigma $ for every $x^{\ast} \in X^{\ast}.$
\begin{Def}
\cite{ABH,HB}
A function $f:T \to \mathbb{R}$ is said to be 
\emph{Henstock-Kurzweil
integrable} (briefly, \emph{$(HK)$-integrable})
on a set $A\in \Sigma$ if there is an element
$I_A \in \mathbb{R}$ such
that for every $\varepsilon >0$ there is a gauge $\delta$ on
$A$ with
$| S(f, {\mathcal D}) - I_A| \leq \varepsilon,$
whenever ${\mathcal D}$ is a $\delta$-fine partition of $A$
such that $S(f, {\mathcal D})$ exists in $\mathbb{R}$, 
and we write $$\displaystyle{(HK)
\int_A \, f \, d\nu}=I_A.$$ 
 \end{Def}
\begin{Def}
\cite{ABH}
We say that
a $\Sigma$-measurable function $f:T \to \mathbb{R}$ 
is  
\emph{Kluv\'{a}nek-Lewis-Henstock-Kurzweil 
$\mu$-integrable},
shortly (\emph{$(HKL)$ $\mu$-integrable}) if 
the following properties hold: 
\begin{eqnarray}\label{first}
f \text{   is   } 
 |x^{\ast} \mu| 
\text{-Henstock-Kurzweil integrable  for each }  
x^{\ast} \in X^{\ast},
\end{eqnarray} and for every $A \in \Sigma$ there is
 $x_A^{(HK)} \in X$ with
\begin{eqnarray}\label{second} 
 x^{\ast}(x_A^{(HK)})=
(HK) \int_{A}f \, d |x^{\ast}\mu|
\text{    for  all  }
x^{\ast} \in X^{\ast}, 
\end{eqnarray}
where the symbol $(HK)$ in (\ref{second})
denote the usual  Henstock-Kurzweil integral
of a real-valued function with respect to an 
(extended) real-valued measure.
\end{Def}
A $\Sigma$-measurable function $f:T \to \mathbb{R}$ 
is said to be 
\emph{weakly $(HKL)$} \emph{$\mu$-integrable} if it 
satisfies only condition (\ref{first}).
If $f:T \to \mathbb{R}$ is a $(HKL)$-integrable
function, then the \emph{Alexiewicz norm} of $f$ is defined by 
\begin{eqnarray*}\label{alexiewicz}
||f||_{HKL}= \sup_{x^{\ast} \in X^{\ast}, 
\|x^{\ast}\|\leq 1} \left( \sup_{A \in \Sigma} 
\left| (HK) \int_A f(t) \, d|x^{\ast} \mu| \, \right| \, \right)
\end{eqnarray*} 
(see also \cite{ SWARTZ}).
Observe that, by arguing analogously as in
\cite[Theorem 9.5]{GORDON} and  
\cite[Example 3.1.1]{MCLEOD},  
for each $x^{\ast} \in X^{\ast}$
we get that $f=0$ 
$|x^{\ast} \mu|$-almost everywhere if and only if 
$\displaystyle{(HK) \int_A f(t) \, d|x^{\ast} \mu|}=0$
for every $A \in \Sigma.$ 
Thus, it is not difficult to see that $\|\cdot \|_{HKL}$ is a norm.
In general, the space of the real-valued
Henstock-Kurzweil integrable functions
endowed with the Alexiewicz norm is not complete.
\begin{thm}\label{may2}
\cite[Corollary 1.4]{JD}
\emph{(Orlicz-Pettis)} Let $X$ be a Banach space. Then a series $\sum\limits_{n=1}^{\infty}x_n $ in $X$ is unconditionally norm convergent whenever it is weakly subseries convergent.
\end{thm}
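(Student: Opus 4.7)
The plan is to argue by contradiction. Suppose $\sum_{n=1}^{\infty} x_n$ is weakly subseries convergent in $X$ but fails to be unconditionally norm convergent. A standard equivalence characterizes unconditional norm convergence via the Cauchy-type criterion that for every $\varepsilon > 0$ there exists $N$ with $\|\sum_{n \in F} x_n\| < \varepsilon$ for every finite $F \subseteq \{N, N+1, \ldots\}$. Negating this produces $\varepsilon_0 > 0$ and a sequence of consecutive finite subsets $F_1, F_2, \ldots$ of $\mathbb{N}$ (meaning $\max F_k < \min F_{k+1}$) such that the block sums $y_k := \sum_{n \in F_k} x_n$ satisfy $\|y_k\| > \varepsilon_0$ for every $k$.

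The first substantial step is to transfer the weak subseries convergence from $(x_n)$ to the block sequence $(y_k)$. Given any $\mathcal{J} \subseteq \mathbb{N}$, the set $A := \bigcup_{k \in \mathcal{J}} F_k$ indexes a subseries of $\sum x_n$, which by hypothesis converges weakly. Since the blocks are consecutive and finite, passing to the partial sums at the block endpoints selects a subsequence of a weakly convergent sequence of partial sums, so $\sum_{k \in \mathcal{J}} y_k$ converges weakly in $X$. In particular, with $\mathcal{J} = \mathbb{N}$ one has $y_k \to 0$ weakly, while $\|y_k\| > \varepsilon_0$ keeps $(y_k)$ bounded away from the origin in norm.

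Next I would invoke the Bessaga-Pe\l czy\'nski selection theorem: a weakly unconditionally Cauchy series $\sum y_k$ in a Banach space whose terms do not converge to zero in norm admits a subsequence $(y_{k_j})$ equivalent to the unit vector basis of $c_0$. The weak subseries convergence of $\sum y_k$ from the previous step is in particular weakly unconditionally Cauchy, since for each $x^{\ast} \in X^{\ast}$ the scalar series $\sum x^{\ast}(y_k)$ is subseries convergent and hence absolutely convergent. Applying the selection theorem yields a bounded linear isomorphism $T$ from $\overline{\mathrm{span}}\{y_{k_j} : j \in \mathbb{N}\}$ onto $c_0$ with $T y_{k_j} = e_j$, where $(e_j)$ denotes the canonical basis of $c_0$.

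The final step yields the contradiction. Since $T$ is weak-to-weak continuous (being bounded and linear), weak convergence of $\sum_j y_{k_j}$ in $X$ would force weak convergence of $\sum_j e_j$ in $c_0$. However, the partial sums $(1, 1, \ldots, 1, 0, 0, \ldots)$ cannot converge weakly in $c_0$, since the only coordinatewise candidate limit---the constant sequence $(1,1,1,\ldots)$---does not lie in $c_0$. This contradicts the weak subseries convergence of $(y_k)$ established in the second step, completing the argument. The main obstacle is the Bessaga-Pe\l czy\'nski step; once this classical selection result is accepted, the rest reduces to bookkeeping with block sums together with the transparent failure of $\sum e_j$ in $c_0$.
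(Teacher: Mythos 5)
Your argument is correct, but it is worth saying up front that the paper does not prove this statement at all: Theorem \ref{may2} is quoted verbatim from Diestel--Uhl \cite[Corollary 1.4]{JD}, where it is obtained by the measure-theoretic route (a weakly countably additive vector measure on a $\sigma$-field is countably additive, applied to the set function $F \mapsto \sum_{n \in F} x_n$ on subsets of $\mathbb{N}$). What you give instead is the other classical proof, via the Bessaga--Pe\l czy\'nski $c_0$-selection principle, and it goes through: the negation of the unconditional Cauchy criterion correctly produces disjoint finite blocks with $\|y_k\| > \varepsilon_0$; weak subseries convergence does pass to the block sums because the partial sums of $\sum_{k \in \mathcal{J}} y_k$ form a subsequence of the weakly convergent partial sums of $\sum_{n \in A} x_n$; weak subseries convergence of $\sum y_k$ gives scalar subseries convergence, hence absolute convergence, of $\sum x^{\ast}(y_k)$, so the series is weakly unconditionally Cauchy and the selection theorem applies; and $\sum_j e_j$ indeed fails to converge weakly in $c_0$ since the only coordinatewise candidate limit is $(1,1,\ldots) \notin c_0$ (the partial sums are weakly Cauchy but not weakly convergent --- $c_0$ is not weakly sequentially complete). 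Two small points deserve to be made explicit: (i) to apply $T$ you need the weak limit of $\sum_j y_{k_j}$ to lie in $\overline{\mathrm{span}}\{y_{k_j}\}$, which follows from Mazur's theorem that norm-closed subspaces are weakly closed, together with Hahn--Banach to identify weak convergence in the subspace with weak convergence in $X$; (ii) the Bessaga--Pe\l czy\'nski selection theorem carries essentially all the weight here, so one should note that its standard proof (basic sequence selection plus a gliding hump) does not invoke Orlicz--Pettis, which keeps the argument non-circular. Compared with the Diestel--Uhl route the paper cites, your proof trades measure-theoretic machinery for Banach-space structure theory; it is arguably more illuminating about \emph{why} the theorem holds (failure of unconditional convergence forces a copy of $c_0$, whose non-weak-sequential-completeness is the obstruction), at the cost of resting on a selection theorem of comparable depth.
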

\begin{prop}\label{may02}
\cite[Proposition 1.1.1]{PANCHA}
Let $\mcR$ be a ring of sets in $T$ and let $\m: \mcR \to X $ be additive. Then $$||\m||(E)= \sup_{|x^{\ast}| \leq1 }\nu(x^{\ast} \m)(E)~{\textit{for}}~ E \in \sigma(\mcR).$$
\end{prop}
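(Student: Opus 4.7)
The plan is to prove the equality by establishing both inequalities, using the Hahn--Banach duality formula $\|y\| = \sup_{\|x^{\ast}\|\le 1}|x^{\ast}(y)|$ to convert norms of vector sums into scalar quantities, and then exploiting the fact that both $\|\m\|(E)$ and $|x^{\ast}\m|(E)$ are defined as suprema over finite disjoint families in $\mcR$ contained in $E$. So the argument essentially takes place at the level of $\mcR$-partitions and extends to $\s(\mcR)$ by the very definition of semivariation on $\s(\mcR)$.

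First I would unpack the definition of semivariation on $\s(\mcR)$: for an additive $\m:\mcR\to X$ and $E\in\s(\mcR)$,
\[
\|\m\|(E)=\sup\Bigl\{\Bigl\|\sum_{i=1}^{n}a_{i}\,\m(A_{i})\Bigr\|: n\in\N,\ A_{i}\in\mcR\ \text{pairwise disjoint},\ \bu_{i=1}^{n}A_{i}\ci E,\ |a_{i}|\le 1\Bigr\}.
\]
Similarly, $|x^{\ast}\m|(E) = \nu(x^{\ast}\m)(E)$ is the supremum of $\sum_{i}|x^{\ast}\m(A_{i})|$ over the same class of finite $\mcR$-partitions of subsets of $E$.

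For the inequality $\|\m\|(E)\le \sup_{\|x^{\ast}\|\le 1}|x^{\ast}\m|(E)$, I would fix a disjoint family $\{A_{i}\}_{i=1}^{n}\ci\mcR$ with $\bu A_{i}\ci E$ and scalars $|a_{i}|\le 1$, and pick by Hahn--Banach an $x^{\ast}\in X^{\ast}$ with $\|x^{\ast}\|\le 1$ achieving $\|\sum_{i}a_{i}\m(A_{i})\|= x^{\ast}\bigl(\sum_{i}a_{i}\m(A_{i})\bigr)$. Then
\[
\Bigl\|\sum_{i=1}^{n}a_{i}\m(A_{i})\Bigr\|=\sum_{i=1}^{n}a_{i}(x^{\ast}\m)(A_{i})\le \sum_{i=1}^{n}|(x^{\ast}\m)(A_{i})|\le |x^{\ast}\m|(E),
\]
and taking the sup over such finite families gives the bound.

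For the reverse inequality, I would fix $x^{\ast}\in X^{\ast}$ with $\|x^{\ast}\|\le 1$ and a disjoint $\{A_{i}\}\ci\mcR$ with $\bu A_{i}\ci E$, and set $\e_{i}=\sgn\bigl((x^{\ast}\m)(A_{i})\bigr)$ (with $\e_{i}=0$ when the term vanishes), so that $|\e_{i}|\le 1$ and $\sum_{i}|(x^{\ast}\m)(A_{i})|=\sum_{i}\e_{i}(x^{\ast}\m)(A_{i})=x^{\ast}\bigl(\sum_{i}\e_{i}\m(A_{i})\bigr)\le \|\sum_{i}\e_{i}\m(A_{i})\|\le \|\m\|(E)$. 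Passing to the sup over partitions yields $|x^{\ast}\m|(E)\le \|\m\|(E)$, and then taking the sup over $\|x^{\ast}\|\le 1$ closes the proof.

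The main subtlety, rather than a real obstacle, is just to make sure both sides of the claimed identity are interpreted via the same class of finite $\mcR$-partitions of subsets of $E$, so that the extension from $\mcR$ to $\s(\mcR)$ is automatic; if instead one took the definition via countable $\s(\mcR)$-partitions for $|x^{\ast}\m|$, then one would additionally need a monotone-class or approximation step using $\s$-additivity of $x^{\ast}\m$, which is where I would expect to spend most of the bookkeeping.
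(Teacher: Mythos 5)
The paper gives no proof of this proposition---it is quoted directly from Panchapagesan's book---and your two-inequality argument (Hahn--Banach to select a norming $x^{\ast}$ for one direction, the sign trick $\e_i=\sgn((x^{\ast}\m)(A_i))$ for the other, both carried out over finite disjoint $\mcR$-families contained in $E$) is precisely the standard proof of the cited result. It is correct, including your closing observation that both $\|\m\|(E)$ and $\nu(x^{\ast}\m)(E)$ for $E\in\sigma(\mcR)$ must be read via $\mcR$-subfamilies of $E$, which is what makes the extension from $\mcR$ to $\sigma(\mcR)$ definitional rather than requiring an approximation step.
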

\begin{prop}\label{may002}
\cite[Proposition 5]{Dinu}
Let $P$ be a $\delta-$ring of subsets of $T$ and let $\nu$ be a ($\sigma-$additive) scalar measure on $P.$ If $f: T \to \mathbb{K}$ is $\nu-$integrable, let $\nu(E)= \int_{E}f d\nu,~E \in \sigma(P)$ then $\nu$ is $\sigma-$additive on $\sigma(P),$ where $\mathbb{K}$ is $\mathbb{C}$ or $\mathbb{R}.$
\end{prop}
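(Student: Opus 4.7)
The plan is to establish the $\sigma$-additivity of the set function $\lambda(E) := \int_E f \, d\nu$ on $\sigma(P)$; I relabel the indefinite integral as $\lambda$ (the statement uses $\nu$ for both the scalar measure and its indefinite integral) to avoid overloading notation. Finite additivity follows immediately from linearity of the integral with respect to the integrand: for disjoint $E_1, E_2 \in \sigma(P)$ one has $\lambda(E_1 \cup E_2) = \int f(\chi_{E_1} + \chi_{E_2}) \, d\nu = \lambda(E_1) + \lambda(E_2)$, using that $\nu$-integrability of $f$ entails $\nu$-integrability of $f \chi_E$ for every $E \in \sigma(P)$.

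The core step is to promote finite additivity to countable additivity. Given a pairwise disjoint sequence $(E_n)_n$ in $\sigma(P)$ whose union $E = \bigcup_n E_n$ lies in $\sigma(P)$, I set $g_N := \sum_{n=1}^N f \chi_{E_n}$. Then $g_N \to f \chi_E$ pointwise everywhere on $T$ and $|g_N| \leq |f|$ for every $N$. Since $f$ is $\nu$-integrable, $|f|$ is $|\nu|$-integrable on $E$, so the classical scalar dominated convergence theorem applies and delivers
\[
\sum_{n=1}^N \lambda(E_n) \;=\; \int g_N \, d\nu \;\longrightarrow\; \int_E f \, d\nu \;=\; \lambda(E).
\]
Absolute convergence of the series comes for free from $\sum_n |\lambda(E_n)| \leq \sum_n \int_{E_n} |f| \, d|\nu| = \int_E |f| \, d|\nu| < +\infty$, which in particular renders the sum independent of the ordering of $(E_n)$ and hence legitimizes the identification $\lambda(E) = \sum_n \lambda(E_n)$.

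The main technical point to address, and where I expect any real work to go, is that $P$ is only a $\delta$-ring and not a $\sigma$-algebra, so $T$ itself need not belong to $\sigma(P)$ and $|\nu|$ is a priori only $\sigma$-finite on $\sigma(P)$ rather than globally finite. This is resolved by restricting attention to the trace $\sigma$-algebra on $E$: every $E \in \sigma(P)$ is $|\nu|$-$\sigma$-finite, a standard fact for $\sigma$-additive scalar measures on $\delta$-rings, and on this localized measurable space the DCT is the classical one. Modulo this localization the proof is a direct transcription of the standard Lebesgue argument, and Proposition \ref{may02} (Orlicz--Pettis) is not required here because the target space is scalar and absolute convergence already gives unconditional convergence.
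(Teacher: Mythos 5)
The paper gives no proof of this proposition at all: it is imported verbatim as \cite[Proposition 5]{Dinu} and used as a black box, so there is nothing internal to compare against. Your argument is correct and is essentially the standard proof of the cited result (finite additivity by linearity, countable additivity by dominated convergence against the dominant $|f|$, absolute convergence of the series from $\int_E |f|\,d|\nu|<+\infty$, with the $\sigma$-finiteness localization correctly handling the fact that $P$ is only a $\delta$-ring), so nothing further is needed.
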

\begin{prop}\label{0002}
\cite[Theorem 1.3]{DR}
Let $\Sigma$ be a $\sigma-$ring of sets. If $\nu: \Sigma \to X $ is $\sigma-$additive and if $(E_n)$ is a convergent sequence in $\Sigma$ with its limit $E,$ then $\lim\limits_{n}||\nu||(E_n)= ||\nu||(E).$
\end{prop}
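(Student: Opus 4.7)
The plan is to reduce the claim to showing that the semivariation $\|\nu\|$ is continuous from above at $\emptyset$, and then to establish that continuity by invoking a control measure for $\nu$. Interpreting ``$(E_n)$ converges to $E$'' in the standard set-theoretic sense, namely $\chi_{E_n}\to\chi_E$ pointwise (equivalently $\liminf_n E_n=\limsup_n E_n=E$), I set $F_n:=\bigcup_{k\geq n}(E_k\Delta E)$. Since $\Sigma$ is a $\sigma$-ring and $E_k\Delta E\subseteq E_k\cup E\in\Sigma$, each $F_n$ lies in $\Sigma$, with $F_n\downarrow\emptyset$ and $E_n\Delta E\subseteq F_n$. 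From subadditivity and monotonicity of the submeasure $\|\nu\|$, I obtain $\bigl|\,\|\nu\|(E_n)-\|\nu\|(E)\,\bigr|\leq\|\nu\|(E_n\Delta E)\leq\|\nu\|(F_n)$, so the problem reduces to proving that $\|\nu\|(F_n)\to 0$ whenever $F_n\downarrow\emptyset$.

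For each $x^{\ast}\in X^{\ast}$ with $\|x^{\ast}\|\leq 1$, the scalar measure $x^{\ast}\nu$ is $\sigma$-additive, and so is its total variation $|x^{\ast}\nu|$; the latter is finite by Remark~\ref{finitesemivariation}. Hence $|x^{\ast}\nu|(F_n)\to 0$ pointwise in $x^{\ast}$. The \emph{main obstacle} is to upgrade this to uniform convergence over the unit ball of $X^{\ast}$, so that $\|\nu\|(F_n)=\sup_{\|x^{\ast}\|\leq 1}|x^{\ast}\nu|(F_n)$ itself tends to zero. A naive bound through $\|\nu(F_n)\|\to 0$ (which follows from $\sigma$-additivity of $\nu$) is insufficient, since the semivariation $\|\nu\|(F_n)$ may strictly exceed $\|\nu(F_n)\|$.

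To overcome this obstacle I would invoke a Bartle--Dunford--Schwartz/Rybakov type control-measure theorem: because $\nu$ is $\sigma$-additive on the $\sigma$-ring $\Sigma$, there exists a finite non-negative $\sigma$-additive scalar measure $\lambda$ on $\Sigma$ (concretely one may take $\lambda=|x_0^{\ast}\nu|$ for a suitable $x_0^{\ast}\in X^{\ast}$, $\|x_0^{\ast}\|\leq 1$) with respect to which $\|\nu\|$ is continuous, i.e.\ for every $\varepsilon>0$ there is $\delta>0$ with $\lambda(A)<\delta\Rightarrow\|\nu\|(A)<\varepsilon$. The underlying ingredient that delivers the uniform $\sigma$-additivity of the family $\{x^{\ast}\nu:\|x^{\ast}\|\leq 1\}$, and hence the existence of such $\lambda$, is precisely the Orlicz--Pettis theorem (Theorem~\ref{may2}), applied to the disjoint decomposition $F_1=\bigsqcup_{k\geq 1}(F_k\setminus F_{k+1})$. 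Once $\lambda$ is at hand, finite $\sigma$-additivity and $F_n\downarrow\emptyset$ force $\lambda(F_n)\to 0$, and hence $\|\nu\|(F_n)\to 0$; combined with the reduction above this yields $\lim_n\|\nu\|(E_n)=\|\nu\|(E)$.
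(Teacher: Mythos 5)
The paper offers no proof of this proposition: it is quoted verbatim as \cite[Theorem 1.3]{DR} in the preliminaries, so there is nothing internal to compare your argument against. Judged on its own, your proof is essentially correct and follows the standard route. The reduction via $F_n=\bigcup_{k\geq n}(E_k\Delta E)$ is sound: each $F_n$ lies in the $\sigma$-ring, $F_n\downarrow\emptyset$ precisely because $\chi_{E_k}\to\chi_E$ pointwise, and the inequality $\bigl|\;\|\nu\|(E_n)-\|\nu\|(E)\;\bigr|\leq\|\nu\|(E_n\Delta E)$ is valid since the semivariation is monotone and subadditive. You also correctly identify the real difficulty (uniformity of $|x^{\ast}\nu|(F_n)\to 0$ over the unit ball), and invoking a Bartle--Dunford--Schwartz/Rybakov control measure is a legitimate way to close it. Two minor caveats. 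First, your attribution of the uniform countable additivity of $\{x^{\ast}\nu:\|x^{\ast}\|\leq 1\}$ to the Orlicz--Pettis theorem alone is an overstatement: Orlicz--Pettis upgrades weak subseries convergence to norm convergence, but the uniformity over the dual ball comes from the control-measure theorem itself (or, if one wants to avoid that black box, from a sliding-hump argument producing disjoint sets $G_j$ with $\|\nu(G_j)\|$ bounded away from zero, contradicting unconditional convergence of $\sum_j\nu(G_j)$). Second, since $\Sigma$ is only a $\sigma$-ring, the control-measure theorem should be applied to the restriction of $\nu$ to the trace $\sigma$-algebra $\Sigma\cap F_1$ on $F_1$, which is where all the sets $F_n$ live; this is a one-line fix but worth stating.
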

\begin{thm}
\emph{(Pettis)} Let $\Sigma $ be a $\sigma-$ring of subsets of $T.$ Let $\lambda: \Sigma \to [0, \infty] $ be a $\sigma-$subadditive submeasure and let $\nu: \Sigma \to X $ be a $\sigma-$additive then $\nu << \lambda$ if and only if $\nu $ is $\lambda-$continuous.
\end{thm}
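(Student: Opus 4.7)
The plan is to establish the two directions separately. The $(\Leftarrow)$ direction is essentially immediate: if $\nu$ is $\lambda$-continuous, then for every $\e > 0$ there exists $\de > 0$ such that $\la(E) < \de$ implies $\|\nu(E)\| < \e$; applying this when $\la(E)=0$ forces $\|\nu(E)\| < \e$ for every $\e > 0$, hence $\nu(E) = 0$. So the whole content of the theorem lies in the $(\Ra)$ direction, which I will prove by contradiction.

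Assume $\nu \ll \la$ but $\nu$ is not $\la$-continuous. Then there exist $\e_0 > 0$ and a sequence $(E_n) \ci \Si$ with $\la(E_n) < 2^{-n}$ and $\|\nu(E_n)\| \geq \e_0$ for all $n \in \N$. Set
\[
F_n = \bu_{k \geq n} E_k, \qquad F = \bi_{n=1}^{\iy} F_n.
\]
Since $\la$ is a $\s$-subadditive submeasure (and therefore monotone), $\la(F_n) \leq \sum_{k \geq n} \la(E_k) \leq 2^{-n+1}$, so $\la(F_n) \to 0$ and $\la(F) \leq \la(F_n) \to 0$, giving $\la(F) = 0$. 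Moreover, for every $A \in \Si$ with $A \ci F$ one has $\la(A) \leq \la(F) = 0$, hence $\nu(A) = 0$ by absolute continuity. Taking any finite disjoint family of measurable subsets of $F$, each has $\nu$-measure zero, so for every $x^{\ast} \in X^{\ast}$ with $\|x^{\ast}\| \leq 1$ we get $|x^{\ast} \nu|(F) = 0$; hence by the definition of semivariation, $\|\nu\|(F) = 0$.

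On the other hand, $F_n \downarrow F$ is a convergent sequence in $\Si$, so Proposition \rf{0002} gives $\|\nu\|(F_n) \to \|\nu\|(F) = 0$. But $E_n \ci F_n$ implies $\|\nu\|(F_n) \geq \|\nu(E_n)\| \geq \e_0 > 0$ for every $n$, a contradiction. This contradiction establishes that $\nu$ must be $\la$-continuous.

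The main obstacle is the step showing $\|\nu\|(F) = 0$: one must pass from the pointwise condition $\nu(A) = 0$ on measurable $A \ci F$ to the vanishing of the semivariation on $F$. This works because $\la$ is monotone, which transfers $\la(F) = 0$ to every measurable subset of $F$, and then absolute continuity is applied subset by subset before invoking Proposition \rf{may02} (Orlicz--Pettis)---implicit in the definition of $\|\nu\|$ via \rf{domination}---to recover the semivariation. The other non-obvious ingredient, the continuity $\|\nu\|(F_n) \to \|\nu\|(F)$ along monotone sequences, is supplied directly by Proposition \rf{0002}, so the whole argument reduces to the classical exhaustion scheme used in the proof of the Pettis theorem in the vector measure setting.
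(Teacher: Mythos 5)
Your proof is correct, and it is the standard exhaustion argument for the Pettis theorem (as in Diestel--Uhl I.2.1 and Panchapagesan). Note, however, that the paper itself offers no proof of this statement: it is recalled in the Preliminaries as a classical result attributed to Pettis, so there is no in-paper argument to compare against. Your two directions are sound: the $(\Leftarrow)$ implication is immediate from the $\e$--$\de$ formulation of $\la$-continuity, and in the $(\Ra)$ direction the construction $F_n=\bu_{k\ge n}E_k$, $F=\bi_n F_n$ correctly uses $\s$-subadditivity and monotonicity of the submeasure to get $\la(F)=0$, absolute continuity on every measurable subset of $F$ to get $\|\nu\|(F)=0$ directly from the definition of semivariation in \eqref{domination}, and Proposition \ref{0002} to get $\|\nu\|(F_n)\to\|\nu\|(F)$, contradicting $\|\nu\|(F_n)\ge\|\nu(E_n)\|\ge\e_0$. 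Two small remarks: the monotonicity of $\la$ is implicit in the (undefined in this paper, but standard) notion of submeasure, and should be flagged as such; and in your closing paragraph you refer to Proposition \ref{may02} as ``Orlicz--Pettis,'' but that label belongs to Theorem \ref{may2} --- Proposition \ref{may02} is the semivariation identity, and in fact neither is needed, since the definition \eqref{domination} of $\|\nu\|$ already suffices for the step you describe. Neither point affects the validity of the argument.
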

\begin{prop} 
\cite[Proposition 1.1.12]{PANCHA}
 A continuous submeasure $\la$ defined on a $\sigma-$ring $\mathcal{R}$ is a $\sigma-$subadditive.
\end{prop}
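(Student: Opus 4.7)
The plan is to reduce $\sigma$-subadditivity to the finite subadditivity built into the definition of a submeasure, bridging the two with the continuity hypothesis applied to the sequence of partial unions. Concretely, assume $\lambda:\mathcal{R}\to[0,+\infty]$ is a submeasure (so $\lambda(\emptyset)=0$, $\lambda$ is monotone, and finitely subadditive) and is continuous in the sense of continuity from below on $\mathcal{R}$. Let $(E_n)_{n\in\mathbb{N}}\subset\mathcal{R}$ be any sequence whose union $E=\bigcup_{n}E_{n}$ also belongs to $\mathcal{R}$ (this is the setting in which $\sigma$-subadditivity is meaningful for a $\sigma$-ring).

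First I would form the partial unions $F_{n}=\bigcup_{k=1}^{n}E_{k}$. Because $\mathcal{R}$ is a $\sigma$-ring (in particular a ring), each $F_n$ lies in $\mathcal{R}$, and by construction $F_{n}\subset F_{n+1}$ with $\bigcup_{n}F_{n}=E$. Next, iterating finite subadditivity of the submeasure gives
\begin{equation*}
\lambda(F_{n})=\lambda\!\left(\bigcup_{k=1}^{n}E_{k}\right)\leq\sum_{k=1}^{n}\lambda(E_{k})\leq\sum_{k=1}^{\infty}\lambda(E_{k}).
\end{equation*}

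Now I would invoke the continuity hypothesis on the increasing sequence $F_{n}\uparrow E$ (all members and the limit lying in $\mathcal{R}$), yielding $\lambda(F_{n})\to\lambda(E)$. Passing to the limit in the inequality above and combining with the previous display delivers
\begin{equation*}
\lambda\!\left(\bigcup_{n=1}^{\infty}E_{n}\right)=\lim_{n\to\infty}\lambda(F_{n})\leq\sum_{k=1}^{\infty}\lambda(E_{k}),
\end{equation*}
which is exactly $\sigma$-subadditivity.

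I do not expect a genuine obstacle here; the only delicate point is to clarify which notion of ``continuity'' is meant. If by continuous one means continuity from below, the argument above is immediate. If one assumes instead continuity from above at $\emptyset$ (i.e.\ $A_{n}\downarrow\emptyset \Rightarrow \lambda(A_{n})\to 0$), then I would apply it to the ``tails'' $T_{n}=F\setminus F_{n}\in\mathcal{R}$ when $F\in\mathcal{R}$ has finite submeasure, using monotonicity to bound $\lambda(E)-\lambda(F_{n})$ by $\lambda(T_{n})\to 0$; the infinite-value case then follows from the monotone bound by $\sum_{k}\lambda(E_{k})$. Either way the key mechanism is the same: the ring structure keeps partial unions inside $\mathcal{R}$, finite subadditivity handles each stage, and continuity transfers the estimate to the limit.
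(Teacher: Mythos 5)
Your argument is sound, but note that the paper does not actually prove this proposition: it is quoted verbatim from Panchapagesan's book (Proposition 1.1.12 there) and used as a black box, so there is no in-paper proof to compare against. Your reduction --- partial unions $F_n=\bigcup_{k=1}^n E_k$ stay in the $\sigma$-ring, finite subadditivity gives $\lambda(F_n)\le\sum_{k=1}^n\lambda(E_k)$, and continuity transfers the bound to $E=\bigcup_n E_n$ --- is exactly the standard argument and is correct under either reading of ``continuous.'' One refinement for the continuity-from-above-at-$\emptyset$ case (which is the notion Panchapagesan uses): rather than bounding the difference $\lambda(E)-\lambda(F_n)$ by $\lambda(E\setminus F_n)$, which forces you to fuss over infinite values, just apply finite subadditivity directly to the decomposition $E=F_n\cup(E\setminus F_n)$ to get
\begin{equation*}
\lambda(E)\le\lambda(F_n)+\lambda(E\setminus F_n)\le\sum_{k=1}^{\infty}\lambda(E_k)+\lambda(E\setminus F_n),
\end{equation*}
and let $n\to\infty$ using $E\setminus F_n\downarrow\emptyset$. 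This needs no subtraction, no finiteness hypothesis, and no case split, so the ``delicate point'' you flag dissolves entirely.
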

Throughout, we denote by $\sigma(.,A)$ the support function of the set $A \subset X$ defined by 
\begin{eqnarray*}
\sigma(x^\ast, A)= \sup\limits_{ x \in A}x^\ast(x),~x^\ast \in X^\ast
\end{eqnarray*}
 Let $P_0(X)=\{A \subset X:~A~is~non~empty~subset~of~X\}.$ Let $A,B \in P_0(X),$ the Hausdorff metric $H$ of $A~\&~B$ is defined by 
\begin{eqnarray*}
H(A,B)= \max\{\sup\limits_{a \in A}d(a,B),~\sup\limits_{b \in B}d(b,A)\},
\end{eqnarray*}
where $d(a,B)= \inf\limits_{b \in B}\{\sup d(a,B),~\sup\limits_{b \in B}d(b,A)\}$  and  $d(a,B)= \inf\limits_{b \in B}||a-b||.$
\begin{Def}
\cite[Definition 1]{Cai} Let $(T, \Sigma)$ be a measurable space. A set-valued set function $\mathcal{M}:\Sigma \to P_{0}(X)$ is said to be a set valued measure if it satisfies the following two conditions:
\begin{enumerate}
\item $\mathcal{M}(\emptyset)= \{0\}$
\item If $A_1, A_2,...$ are in $\Sigma$ with $A_i \cap A_j= \emptyset$ for $i \neq j,$ then $\mathcal{M}(\sum\limits_{n=1}^{\infty}A_n) = \sum\limits_{n=1}^{\infty}\mathcal{M}(A_n),$ where $$\sum\limits_{n=1}^{\infty}\mathcal{M}(A_n)= \{x \in X:~x= \sum\limits_{n=1}^{\infty}x_n~(unc.conv),~x_n \in \mathcal{M}(A_n),~n \geq 1 \}.$$ 
\end{enumerate}
\end{Def}
In particular $\mathcal{M}: \Sigma \to P_{wkc}(X)$ is a set-valued measure if and only if $\sigma(x^\ast, \mathcal{M}(.))$ is a scalar valued measure for all $x^\ast \in X^\ast.$ A vector valued measure $\mu:\Sigma \to X$ satisfying $\mu(A) \in \mathcal{M}(A)~\forall~A \in \Sigma$ is said to be a selection of the set valued measure $\mathcal{M}.$ We denote the set of all selections of $\mathcal{M}$ by $\mathcal{S}_{\mathcal{M}}.$

Recalling that the support function satisfies the property
\begin{equation*}
\sigma(x^\ast, A+B)= \sigma(x^\ast,A) + \sigma(x^\ast, B)
\end{equation*}
\begin{equation*}
\sigma(x^\ast, \lambda A)= \lambda( x^\ast, A)
\end{equation*}
for all $A, B \in P_{0}(X),~x^\ast \in X^\ast ~\&~\lambda\geq 0.$ In particular 
\begin{eqnarray*}
H(A,B)= \sup\limits_{||x^\ast||\leq 1}|\sigma(x^\ast,A)-\sigma(x^\ast,B)|,
\end{eqnarray*}
whenever $A,B$ are two convex sets.
\section{Basic properties of Kluv\'{a}nek-Lewis-Henstock integrable $\m-$ integral:}

In this section we will discuss basic properties of  (HKL)$\m-$integrable function which are $\m-$ measurable.
\begin{Def}\label{may001}
If $f $ is \emph{$\m-$measurable,} then there exists $M \in \sigma(P)$ with $||\m||(M)=0$ such that $f_{\chi_{T \setminus M}} $ is \emph{(HKL) $\m-$integrable } in $T,$ then we say that $f$ is \emph{(HKL) $\m-$integrable} in $T,$ and in that case we define \begin{eqnarray}
(HKL) \int_E f d|x^{\ast}\m|= (HKL)\int_E f_{\chi_{T \setminus M}}d |x^{\ast}\m| ~for ~E \in \sigma(P).
\end{eqnarray}
 and 
 \begin{eqnarray}
 (HKL)\int_T f d |x^{\ast} \m|= (HKL) \int_{\N}f \chi_{T \setminus M} d |x^{\ast} \m|.
 \end{eqnarray}
\end{Def}
Let $f$ be $\m-$measurable and let it be \emph{(HKL) $\m-$integrable} in $T$ with $M$ as in Definition \ref{may001}. If $M_1 \in \sigma(P)$ with $||\m||(M_1)=0 $ is such that $f_{\chi_{T \setminus M_1}}$ is\emph{ $\sigma(P)-$measurable}, then it can easily be shown that $f_{\chi_{T \setminus M_1}}$ is also \emph{(HKL)$\m-$integrable} in $T$ and $(HKL)\int_{E}f_{\chi_{T \setminus M}}d|x^{\ast}\m|$ for $E \in \sigma(P) \cup \{T\}.$ Consequently $(HKL)\int_{E}f d|x^{\ast}\m|$ is well defined and does not depend on the particular choice of the $|x^{\ast}\m|-$null set $M$ in Definition \ref{may001}. 
From now onward we denote the notation $|x^{\ast}\m| $ as $m.$ $\mcH(\m)$ denotes the class of all $\m-$measurable scalar functions on $T$
which are (HKL) $\m-$integrable in $T .$
It is observe that 
if $A$, $B\in \Sigma$,
$B \subset A$ and $f:T \to \mathbb{R}$ is 
$(HKL)$-integrable on $A,$  then $f$ is also
$(HKL)$-integrable on $B$ and on $A \setminus B,$ and 
\begin{eqnarray*}\label{add}
(HKL) \int_A f (t) \, dm= (HKL) \int_B f (t) \, dm +
(HKL) \int_{A \setminus B} f (t) \, dm .
\end{eqnarray*}
\begin{thm}\label{bhanish}
Let $f: T\to K$  or $[-\infty,\infty]$ and let $s$ be a $P$-simple function. 
Then the following hold:
\begin{enumerate}
\item  If $s = \sum\limits_{1}^{r} \alpha_ i \chi_{E _i}$ is \emph{ $P-$simple } with $(\alpha_i)_{1}^{r} \subset K , ~(E _i )_{1}^{r} \subset P$ and $E_i \cap E_j = \emptyset$ for $i \neq j, $ then $s$ is $\m-$integrable in $T$ and $(HKL)\int_{E}s d\m= \sum\limits_{1}^{r}\alpha_i \m (E_i \cap E) $ for $E \in \sigma(P).$ Consequently $$||\m||(E)= \sup\{|(HKL)\int_{E}s d m|~: {\textit{s}}~ P-{\textit{simple,}} ~|s(t)| \leq \chi_E(t),~t \in T\}.$$ 
\item If $f$ is (HKL)$\m-$integrable in $T,$ then $\nu(.): \sigma(P) \to X$ given by $$ \nu(.)= (HKL)\int_{(.)} f dm ~{\textit{is}} ~\sigma-{\textit{additive in norm}}.$$
\item If $\nu(.)$ as in (2), then 
\begin{enumerate}
\item[(a)] $||\nu||(E)= \sup_{|x^{\ast}| \leq 1}\int_{E}|f|d \nu(m),~E \in \sigma(P).$
\item[(b)] $\lim\limits_{||\m|| \to 0} \nu(E)= \lim\limits_{||\m||(E) \to 0}||\nu||(E)=0,~E \in \sigma(P).$
\end{enumerate}
\item $\mcH[\m]$ is a vector space over $K$ with respect to pointwise addition and scalar multiplication. Moreover, for $E \in \sigma(P)$ fixed, the mapping $f \to (HKL)\int_{E}f dm $ is linear on $\mcH(\m).$ Consequently $\int_{E} s d m = \sum\limits_{i=1}^{r} \alpha_i m(E_i \cap E) $ for $E \in \sigma(P) $ and for $s = \sum\limits_{1}^{r}\alpha_i \chi_{E_i},$ with $(E_i)_{1}^{r} \subset P $ not necessarily mutually disjoint.
\end{enumerate}
\end{thm}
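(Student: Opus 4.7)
The overall strategy is to reduce each vector-valued claim to its scalar counterpart via the duality pairing with $X^*$, using the Orlicz--Pettis theorem to upgrade weak results to norm results where needed. For (1), I would fix a disjoint $P$-simple $s=\sum_{i=1}^{r}\alpha_i\chi_{E_i}$. For every $x^*\in X^*$ the function $s$ is trivially $(HK)$-integrable with respect to the scalar measure $|x^*\m|$, and a direct computation yields the usual Riemann-sum value. Setting $x_E^{(HK)}:=\sum_{i=1}^{r}\alpha_i\,\m(E_i\cap E)\in X$ and applying linearity of $x^*$ verifies the duality relation in the definition of $(HKL)$ $\m$-integrability, so $s\in\mcH(\m)$ with the claimed value. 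The semivariation description then drops out by combining $||\m||(E)=\sup_{||x^*||\le 1}|x^*\m|(E)$ (Proposition \ref{may02}) with the classical description of the total variation of a scalar measure as a supremum of integrals of simple $s$ with $|s|\le\chi_E$.

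For (2), fix $x^*\in X^*$ and note that the scalar set function $E\mapsto(HK)\int_E f\,d|x^*\m|$ is $\sigma$-additive on $\sigma(P)$ by (the $(HK)$-analogue of) Proposition \ref{may002}. Hence $x^*\nu$ is $\sigma$-additive for every $x^*\in X^*$; i.e., $\nu$ is weakly countably additive. The Orlicz--Pettis theorem (Theorem \ref{may2}) then promotes this to norm $\sigma$-additivity. Part (3)(a) follows by applying Proposition \ref{may02} to $\nu$ and identifying $|x^*\nu|(E)=\int_E|f|\,d|x^*\m|$, the standard formula for the variation of a scalar indefinite integral. For (3)(b), since $||\m||(E)\to 0$ forces $|x^*\m|(E)\to 0$ uniformly in $||x^*||\le 1$, absolute continuity of the scalar $(HK)$ indefinite integral gives $x^*\nu(E)\to 0$; the Pettis theorem recalled in the excerpt together with Proposition \ref{0002} then delivers continuity of $\nu$ and of $||\nu||$ with respect to $||\m||$.

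Part (4) is essentially a linearity argument: for $f,g\in\mcH(\m)$ and $\lambda\in K$, linearity of the scalar HK-integral combined with uniqueness of the representing vector in $X$ yields $x_A^{(HK)}(f+g)=x_A^{(HK)}(f)+x_A^{(HK)}(g)$ and $x_A^{(HK)}(\lambda f)=\lambda\, x_A^{(HK)}(f)$. For a non-disjoint simple $s=\sum_{i=1}^{r}\alpha_i\chi_{E_i}$, I would refine $(E_i)$ to a finite disjoint family $(F_j)$ in the ring $P$ so that each $E_i$ is the disjoint union of some $F_j$'s, rewrite $s$ in disjoint form, apply (1), and collect terms using additivity of $\m$. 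The main obstacle I anticipate is the rigorous justification of the scalar $\sigma$-additivity used in (2): Proposition \ref{may002} cleanly handles absolutely integrable $f$, but in the full $(HK)$-setting one must invoke a Saks--Henstock type control to rule out conditional-convergence pathologies before Orlicz--Pettis can be applied.
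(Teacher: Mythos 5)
Your proposal follows essentially the same route as the paper's proof, which is only a bare sketch: it declares (1) and (4) obvious, derives (2) by applying the Orlicz--Pettis theorem (Theorem \ref{may2}) to the weakly $\sigma$-additive set function $x^{\ast}\nu$, obtains 3(a) from Propositions \ref{may02} and \ref{may002}, and gets 3(b) from the continuity of the semivariation (Proposition \ref{0002}) together with $|\nu(E)|\leq \|\nu\|(E)$ --- exactly the steps you spell out in greater detail. The one genuine obstacle you flag, namely justifying $\sigma$-additivity of the scalar indefinite $(HK)$-integral for merely conditionally integrable $f$ before Orlicz--Pettis can be invoked, is likewise left unaddressed in the paper, so your account is, if anything, the more complete one.
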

\begin{proof}
Without loss of generality we shall assume that all the $\m-$measurable functions considered here are further $\sigma(P)-$measurable.\\
(1)  is very obvious. Using Theorem \ref{may2} we can prove (2).\\
3(a) Using $(2)$ and Proposition \ref{may02} and  Proposition \ref{may002}.\\
3(b) If $||\mu||(E)=0,~E \in \sigma(P)$ then $\nu(x^\ast \mu)(E)=0$ for each $x^\ast \in X^\ast.$ Now by part  3(a) we get $||\nu||(E)=0.$ As $||\mu||$ and $||\nu||$ are non negative, monotone and $\sigma-$subadditive on the $\sigma-$ring $\sigma(P)$ then $f$ is $(HKL)\mu-$integrable in $T$ implies $\nu(.)=(HKL)\int_{(.)}f d m$ is $\sigma-$additive in norm. Also by Proposition \ref{0002}, $||\nu||$ is continuous on $\sigma(P).$ From the fact $|\nu(E)| \leq ||\nu||(E)~,E \in \sigma(P)$ finally gives $$\lim\limits_{||\mu||\to 0}\nu(E)= \lim\limits_{||\mu||(E) \to 0}||\nu||(E)=0~,E \in \sigma(P).$$ 
(4) It is very obvious, so we leave for readers.
\end{proof}
\begin{thm}
 For a set $A \in T,$ let $P \cap A= \{B \cap A:~B \in P\}$. If $ f $ is an $\m-$measurable function on $T$ and if $f$ is $\m-$essentially bounded in a set $ A \in P,$ then $f $ is (HKL)-$\m-$integrable in each $B \in P \cap A$ and $$|(HKL)\int_{B}f d\m| \leq (ess \sup_{t \in A}|f(t)|)||\m||(B),~B \in \sigma(P) \cap A = P \cap A.$$ 
\end{thm}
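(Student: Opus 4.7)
The plan is to reduce $f$ to a bounded, $\sigma(P)$-measurable function, approximate it uniformly by $P$-simple functions, and pass to the limit in $X$ via a Cauchy argument driven by the semivariation estimate. First, set $M:=\operatorname{ess\,sup}_{t\in A}|f(t)|$. Essential boundedness provides some $N\in\sigma(P)$ with $\|\m\|(N)=0$ such that $|f(t)|\le M$ on $A\setminus N$; replacing $f$ by $\tilde f:=f\,\chi_{T\setminus N}\,\chi_A$, which by Definition \ref{may001} has the same $(HKL)\m$-integrals as $f$ whenever defined, reduces the problem to a $\sigma(P)$-measurable function bounded by $M$ and supported in $A\in P$.

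Next I will construct $P$-simple approximants. Since $\tilde f$ is bounded and $\sigma(P)$-measurable with support in $A$, the level sets $\{a<\tilde f\le b\}$ lie in $\sigma(P)\cap A=P\cap A$, so a finite partition of $[-M,M]$ produces $P$-simple functions $s_n=\sum_j\alpha^{(n)}_j\chi_{E^{(n)}_j}$ with $E^{(n)}_j\in P\cap A$ pairwise disjoint, $|s_n|\le M$, and $\|s_n-\tilde f\|_\infty\to 0$. By Theorem \ref{bhanish}(1) each $s_n$ is $(HKL)\m$-integrable on every $B\in P\cap A$; write $x^{(n)}_B:=(HKL)\int_B s_n\,d\m\in X$. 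The key Cauchy estimate, for $x^*\in X^*$ with $\|x^*\|\le 1$, is
\begin{eqnarray*}
|x^*(x^{(n)}_B-x^{(m)}_B)|=\Bigl|(HK)\int_B(s_n-s_m)\,d|x^*\m|\Bigr|\le\|s_n-s_m\|_\infty\,|x^*\m|(B),
\end{eqnarray*}
which, combined with (\ref{domination}) and the finiteness of $\|\m\|(B)$ from Remark \ref{finitesemivariation}, yields $\|x^{(n)}_B-x^{(m)}_B\|\le\|s_n-s_m\|_\infty\,\|\m\|(B)\to 0$. Hence $(x^{(n)}_B)_n$ is Cauchy in $X$ and converges to some $x_B\in X$.

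It remains to identify $x_B$ as the $(HKL)\m$-integral of $f$ on $B$ and to derive the bound. For each $x^*$, $\tilde f$ is bounded and $\sigma(P)$-measurable while $|x^*\m|$ is a finite non-negative measure on $B$, so $\tilde f$ is Lebesgue-integrable, hence Henstock-Kurzweil integrable with respect to $|x^*\m|$, and uniform convergence yields $(HK)\int_B s_n\,d|x^*\m|\to(HK)\int_B\tilde f\,d|x^*\m|$. Since $x^*(x^{(n)}_B)=(HK)\int_B s_n\,d|x^*\m|$ by construction, continuity of $x^*$ gives $x^*(x_B)=(HK)\int_B\tilde f\,d|x^*\m|$, which is exactly condition (\ref{second}). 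The bound then follows at once from $|x^*(x_B)|\le M\,|x^*\m|(B)\le M\,\|\m\|(B)$ upon taking the supremum over $\|x^*\|\le 1$.

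I expect the main technical point to be the $P$-simple approximation step: one needs the level sets of $\tilde f$ inside $A$ to actually lie in $P$ rather than merely in $\sigma(P)$, which is precisely the content of the identity $\sigma(P)\cap A=P\cap A$ stated in the theorem and relies on $A\in P$ together with the $\delta$-ring structure of $P$. The essential boundedness on $A$ is what caps the approximation at $M$ and ultimately controls the norm of the limit $x_B$.
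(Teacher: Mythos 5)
Your proposal follows essentially the same route as the paper's proof: reduce to a bounded $\sigma(P)$-measurable function supported in $A$ via the null set from essential boundedness, approximate uniformly by simple functions, obtain the limit $x_B\in X$ from the Cauchy estimate $\|x^{(n)}_B-x^{(m)}_B\|\le\|s_n-s_m\|_\infty\,\|\m\|(B)$, identify $x^*(x_B)$ with the scalar $(HK)$-integrals, and deduce the bound. The only cosmetic difference is that you pass to the limit in the scalar integrals by uniform convergence where the paper invokes the Dominated Convergence Theorem for the scalar measures; the argument is the same.
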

\begin{proof}
Let $A \in P$ and $\mathfrak{K}= ess\sup\limits_{t \in A}|f(t)|$ then there exists $M_1 \in \sigma(P) \cap A= P \cap A$ with $||\m||(M_1)=0$ such that $\sup\limits_{t \in A \setminus M_1}|f(t)|= \mathfrak{K}.$ Let $\Sigma_{A}= P \cap A$ then $\Sigma_{A}$ is $\sigma$ algebra of subset of $A$ also $f_{\chi_{A\setminus M_1}}$ is $\Sigma_{A}-$measurable and bounded then there exists a sequence $(s_n)_{n=1}^{\infty}$ of $\Sigma_{A}-$simple function such that $s_n \to f_{\chi_{A\setminus M_1}}$ uniformly in $A$ with $|s_n| \to f$ in $A.$ Let $B \in \Sigma_{A}$ then for $\varepsilon>0$ there exists $n_0$ such that $||s_n -s_m||_{A}||\m||(A) < \varepsilon$ for $n, m > n_0.$ Consequently,
\begin{align*}
|(HK)\int_{B}s_n d\m -(HK)\int_{B}s_m d\m|&\leq ||s_n-s_m||_{B}||\m||(B)\\&\leq ||s_n -s_m||_{A}||\m||(A)\\&<\varepsilon~ \forall ~n,m \geq n_0.
\end{align*}
So, for each $B \in \Sigma_{A}$ there exists a vector $x_{B}^{(HK)} \in X$ such that $\lim\limits_{n \to \infty}(HK)\int_{B}s_n d\m = x_{B}^{(HK)}.$
By Lebesgue Dominated Convergence Theorem on $\sigma-$algebra $\Sigma_A$ for scalar measure $$x^\ast(x_{B}^{(HK)}= \lim\limits_{n \to \infty}(HK)\int_{B}s_n d(x^\ast \m)$$ for each $x^\ast \in X^\ast$ and for each $B \in \Sigma_A,$ thus $f$ is $(HKL)-\m$-integrable in $A$ and $$(HKL)\int_{B}f d\m = x_{B}^{(HK)},\mbox{~for~each~}B \in \Sigma_A.$$  So, 
\begin{align*}
|(HKL)\int_{B}f d\m| &=|x_{B}^{(HK)}|\\&=|\lim\limits_{n \to \infty}(HK)\int_{B}s_nd \m|\\&\leq \mathfrak{K}||\m||(B)
\end{align*}
for $B \in \Sigma_A.$ Hence the complete proof.
\end{proof}
\begin{thm}
If $\phi $ is an $\m-$-essentially bounded $\m-$measurable scalar function on $T$ and if $f \in \mcH(\m),$ then $\phi.f $ is (HKL) $\m-$integrable in $T$ and $$(HKL)\int_{E}f d(|x^\ast\m|) = (HKL)\int_{T}\chi_E f d(|x^\ast \m|).$$
\end{thm}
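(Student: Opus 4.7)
The plan is to approximate $\phi$ uniformly by $\Sigma$-simple functions, handle each simple case by linearity of the $(HKL)$-integral, and pass to the limit via the vector-measure structure induced by $f$.

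First, essential boundedness gives $K>0$ and $M\in\sigma(P)$ with $\|\m\|(M)=0$ and $|\phi|\leq K$ on $T\setminus M$; replacing $\phi$ by $\phi\chi_{T\setminus M}$ (legitimate by Definition~\ref{may001}) I may assume $|\phi|\leq K$ everywhere. Since $\phi$ is $\m$-measurable, I select $\Sigma$-simple functions $\phi_n=\sum_{i=1}^{r_n}a_i^{(n)}\chi_{A_i^{(n)}}$ with pairwise disjoint $A_i^{(n)}\in\Sigma$, $|\phi_n|\leq K$, and $\phi_n\to\phi$ uniformly. The restriction observation recorded immediately before Theorem~\ref{bhanish} shows each $f\chi_{A_i^{(n)}}\in\mcH(\m)$; Theorem~\ref{bhanish}(4) then gives $\phi_n f\in\mcH(\m)$ with $y_{E,n}:=(HKL)\int_E \phi_n f\,d\m\in X$ satisfying
\[
x^*(y_{E,n})=\sum_{i=1}^{r_n}a_i^{(n)}(HK)\!\int_{E\cap A_i^{(n)}}f\,d|x^*\m|\qquad(x^*\in X^*,\ E\in\sigma(P)).
\]

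Second, introduce the vector measure $\nu:\sigma(P)\to X$ by $\nu(A):=(HKL)\int_A f\,d\m$; Theorem~\ref{bhanish}(2) gives its $\sigma$-additivity in norm, Remark~\ref{finitesemivariation} then yields $\|\nu\|(T)<\infty$, and by construction $x^*\nu(A)=(HK)\int_A f\,d|x^*\m|$. Consequently the identity above reads $x^*(y_{E,n})=\int_E\phi_n\,d(x^*\nu)$, an elementary Lebesgue integral against a scalar measure. The standard bound $|\int_E g\,d(x^*\nu)|\leq\|g\|_\infty|x^*\nu|(E)$ for the simple $g=\phi_n-\phi_m$, together with the definition $\|\nu\|(E)=\sup_{\|x^*\|\leq 1}|x^*\nu|(E)$, yields
\[
\|y_{E,n}-y_{E,m}\|\leq\|\phi_n-\phi_m\|_\infty\,\|\nu\|(T)\xrightarrow{n,m\to\infty}0
\]
uniformly in $E\in\sigma(P)$. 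Completeness of $X$ produces a norm-limit $y_E:=\lim_n y_{E,n}\in X$, and dominated convergence against the scalar measure $x^*\nu$ gives $x^*(y_E)=\int_E\phi\,d(x^*\nu)$.

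Third, I identify $\int_E\phi\,d(x^*\nu)$ with $(HK)\int_E\phi f\,d|x^*\m|$, which is the heart of the proof. Given $\epsilon>0$, pick $n$ so that $\|\phi-\phi_n\|_\infty\,\|\nu\|(T)<\epsilon/3$ and $|x^*(y_E-y_{E,n})|<\epsilon/3$, and a gauge $\delta$ on $E$ witnessing HK-integrability of $\phi_n f$ to tolerance $\epsilon/3$; then for every $\delta$-fine partition $\mathcal D$ of $E$,
\[
|S(\phi f,\mathcal D)-x^*(y_E)|\leq|S((\phi-\phi_n)f,\mathcal D)|+|S(\phi_n f,\mathcal D)-x^*(y_{E,n})|+|x^*(y_{E,n}-y_E)|,
\]
and the last two summands are already controlled. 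For the first, I would invoke Henstock's lemma for $\phi_n f$ to pass from Riemann sums of $(\phi-\phi_n)f$ against $|x^*\m|$ to the integral of $(\phi-\phi_n)$ against the scalar measure $x^*\nu$, itself majorised by $\|\phi-\phi_n\|_\infty\|\nu\|(T)<\epsilon/3$. This completes the HK-integrability of $\phi f$ with integral $x^*(y_E)$, and the displayed equality in the statement drops out of Definition~\ref{may001} together with the additivity of the $(HKL)$-integral noted before Theorem~\ref{bhanish}. The main obstacle is precisely the first summand: since $|f|$ need not be HK-integrable, the naive majorisation $\|\phi-\phi_n\|_\infty\,S(|f|,\mathcal D)$ does not close, and the estimate must be routed through the scalar measure $x^*\nu$ rather than through $f$ directly, likely via a Henstock-lemma-style argument adapted to vector measures of finite semivariation.
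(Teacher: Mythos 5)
Your overall strategy coincides with the paper's: both route the argument through the indefinite-integral vector measure $\nu(A)=(HKL)\int_A f\,d\m$ and reduce to integrating the bounded function $\phi$ against $\nu$ (the paper invokes its preceding theorem on essentially bounded functions, which internally performs exactly your uniform simple-function approximation). Your first two steps are sound and produce a well-defined $y_E\in X$ with $x^*(y_E)=\int_E\phi\,d(x^*\nu)$.

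The gap is the one you yourself flag in step three, and it is genuine, not a removable technicality. Controlling $S((\phi-\phi_n)f,\mathcal D)$ by $\|\phi-\phi_n\|_\infty\,\|\nu\|(T)$ is impossible in principle: a Riemann sum samples the pointwise values $f(\xi_j)$, which are not majorised by the semivariation of the indefinite integral, and no Henstock-lemma device can repair this because the desired conclusion is simply false for non-absolute integrals. Concretely, for the classical HK integral on $[0,1]$ take $f$ HK- but not Lebesgue-integrable and $\phi=\sgn f$: then $\phi$ is bounded and measurable and $\int_E\phi\,d(x^*\nu)$ exists, yet $\phi f=|f|$ is not HK-integrable. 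So what your construction actually proves is that $\phi$ is integrable with respect to the vector measure $\nu$ (a Lebesgue/BDS-type statement), not that the pointwise product $\phi f$ is $(HKL)$ $\m$-integrable; identifying the two requires the substitution identity $\int_E\phi\,d(x^*\nu)=(HK)\int_E\phi f\,d|x^*\m|$, which holds for absolute integrals but fails for HK unless one adds hypotheses (e.g.\ $f$ absolutely integrable, or $\phi$ a bounded-variation multiplier). In fairness, the paper's own proof commits the same elision: it writes this identity as an unjustified equality and cites Dinculeanu and Panchapagesan, whose results concern the Lebesgue-type integral. You have therefore located precisely the weak point of the theorem; you just have not closed it, and on the stated hypotheses it cannot be closed.
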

\begin{proof}
Let $ess\sup\limits_{t \in T}|\phi(t)|= \mathfrak{K}$ and let $\mu(.)= (HKL)\int_{(.)}f|x^\ast \mu|$ on $\sigma(P),$ then $\mu$ is $\sigma-$additive on $\sigma(P)$ and $\sigma(P)-$measurable function $\phi$ is also $\mu-$essential bounded hence $\phi$ is $(HKL)\mu-$integrable in $T.$ Let $(HKL)\int_{E}\phi d \mu = x_{E}^{(HK)} $ for $E \in \sigma(P).$ Now
\begin{align*}
x^\ast(x_{E}^{(HK)}&=(HK)\int_{E}\phi d (|x^\ast \mu|)\\&= (HK)\int_{E}\phi f d (|x^\ast \mu|)
\end{align*}
for $E \in \sigma(P)$ and $x^\ast \in X^\ast.$ Using \cite[Proposition 1.1.18]{PANCHA}, Lebesgue Dominated Convergence Theorem for $(KL)-$integral and \cite[Theorem 3]{Dinu} it can be extended to a general $\sigma(P)-$measurable $x^\ast \mu-$integrable function $\phi.$ Hence $\phi.f$ is $(HKL)\mu-$integrable in $T$ also if $E \in \sigma(P)$ and $f$ is $(HKL)\mu-$integrable in $T$ that is if $(HKL)\int_{E}f d |x^\ast \mu|= x_{E}^{(HK)} \in X,$ then
\begin{align*}
x^\ast(x_{E}^{(HK)})&= (HK)\int_{E}f d (|x^\ast \mu|)\\&= (HK)\int_{T} \chi_E f d(|x^\ast \mu|)
\end{align*}
for $x^\ast \in X^\ast.$
\end{proof}
\begin{thm}
If $ u \in \mcH(X,Y), $ then $u \m: P\to X$ is a $\sigma-$additive and $f$ is (HKL)$u \m-$integrable in $T$ whenever $f$ is (HKL) $\m-$integrable in $T.$ In that case, $$ u(HKL) \int_{E} f d\m = (HKL) \int_{E} f d( u \m) {\textit{ for}}~ E \in \sigma(P).$$
\end{thm}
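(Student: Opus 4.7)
The plan is to reduce everything to the scalar-valued case by passing through $Y^{\ast}$ and using that $y^{\ast}\circ u\in X^{\ast}$ for every $y^{\ast}\in Y^{\ast}$. First I would verify that $u\mu:\Sigma\to Y$, defined by $(u\mu)(A)=u(\mu(A))$, is $\sigma$-additive. Given any pairwise disjoint sequence $(A_n)\subset\Sigma$, the $\sigma$-additivity of $\mu$ gives $\mu(\bigcup_n A_n)=\sum_n\mu(A_n)$ unconditionally in $X$, and applying the bounded linear operator $u$ term-by-term yields $(u\mu)(\bigcup_n A_n)=\sum_n(u\mu)(A_n)$ in $Y$. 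Alternatively, observe that for each $y^{\ast}\in Y^{\ast}$ the scalar set function $y^{\ast}(u\mu)=(y^{\ast}u)\mu$ is $\sigma$-additive (since $y^{\ast}u\in X^{\ast}$ and $\mu$ is a vector measure), and then invoke the Orlicz--Pettis theorem (Theorem \ref{may2}) to conclude weak subseries convergence is norm unconditional convergence.

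Next, to establish the $(HKL)$ $u\mu$-integrability of $f$, I would check the two clauses of the definition. For condition (\ref{first}): given $y^{\ast}\in Y^{\ast}$, the composition $x^{\ast}:=y^{\ast}u$ belongs to $X^{\ast}$, so the scalar measures $y^{\ast}(u\mu)$ and $x^{\ast}\mu$ coincide as set functions on $\Sigma$; in particular $|y^{\ast}(u\mu)|=|x^{\ast}\mu|$ as scalar variations. The $(HKL)$ $\mu$-integrability of $f$ supplies $(HK)$-integrability of $f$ with respect to $|x^{\ast}\mu|$ for every $x^{\ast}\in X^{\ast}$, and in particular for those of the form $y^{\ast}u$, so (\ref{first}) holds for $u\mu$.

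For condition (\ref{second}) and the asserted equality simultaneously, let $x_E^{(HK)}:=(HKL)\int_E f\,d\mu\in X$ be the element supplied by the $(HKL)$ $\mu$-integrability of $f$, and set $y_E^{(HK)}:=u(x_E^{(HK)})\in Y$. Then for every $y^{\ast}\in Y^{\ast}$, writing $x^{\ast}=y^{\ast}u$ as above,
\begin{eqnarray*}
y^{\ast}(y_E^{(HK)})=y^{\ast}(u(x_E^{(HK)}))=x^{\ast}(x_E^{(HK)})=(HK)\int_E f\,d|x^{\ast}\mu|=(HK)\int_E f\,d|y^{\ast}(u\mu)|,
\end{eqnarray*}
which is exactly condition (\ref{second}) for the measure $u\mu$. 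Therefore $f$ is $(HKL)$ $u\mu$-integrable with $(HKL)\int_E f\,d(u\mu)=y_E^{(HK)}=u(x_E^{(HK)})=u\,(HKL)\int_E f\,d\mu$, for every $E\in\sigma(P)$.

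I do not anticipate a genuine obstacle: the argument is essentially bookkeeping through the identification $y^{\ast}u\in X^{\ast}$. The only subtle point, which I would state explicitly, is that the scalar variation $|y^{\ast}(u\mu)|$ arising on the $Y$-side equals the variation $|(y^{\ast}u)\mu|$ arising on the $X$-side, since both are the total variation of the single scalar measure $A\mapsto y^{\ast}(u(\mu(A)))$ on $\Sigma$; this identification is what allows the hypothesis on $\mu$ to transfer verbatim to $u\mu$ and yields both the integrability and the commutation with $u$ in one stroke.
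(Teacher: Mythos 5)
Your proposal is correct and follows essentially the same route as the paper's proof, which is a one-line argument resting on exactly the identification you exploit, namely that $y^{\ast}u\in X^{\ast}$ for each $y^{\ast}\in Y^{\ast}$ so that every condition on $u\mu$ reduces to the corresponding condition on $\mu$ tested against functionals of the form $y^{\ast}u$. You simply carry out in full the bookkeeping (the $\sigma$-additivity via Orlicz--Pettis or continuity of $u$, the equality of variations $|y^{\ast}(u\mu)|=|(y^{\ast}u)\mu|$, and the verification of both clauses of the definition) that the paper leaves implicit.
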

\begin{proof}
Since $u\mu: P \to X$ is $\sigma-$additive so, $u\mu: P \to Y$ is $\sigma-$additive for $y^\ast \in Y^\ast,~y^\ast u \in X^\ast$ and hence the proof.
\end{proof}
\subsection{  $\m$-a.e. version of convergences}
\begin{thm}\label{bhanish1}
\emph{(Dominated Convergence Theorem $\m$-a.e. convergence version)} Let $f_n,~n \in \N$ be $\m-$measurable with values in $K$ or $[-\infty, \infty] $ and let $g: T \to K$ be (HKL) $\m-$integrable in $T.$ Suppose $f_n(t)| \leq |g(t)|~\m-$a.e. in $T$ for all $n.$ If $f: T \to K$ and if $f_n(t) \to f(t) ~\m-$a.e. in $T$ then $f, ~f_n, ~n \in \N $ are (HKL)~$\m-$integrable in $T$ and $(HKL)\int_{E} fd \m = \lim\limits_{n \to \infty}(HKL)\int_{E} f_n d\m,~E \in \sigma(P),$ where the limit is uniform with respect to $ E \in \sigma(P).$ Moreover, $$ \lim\limits_{n \to \infty}\sup_{|x^{\ast}| \leq 1} \int_{T}|f_n - f|d|\m|(|x^\ast \mu|)=0.$$
 \end{thm}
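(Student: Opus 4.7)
The idea is to reduce the full statement to the scalar $(HK)$ Dominated Convergence Theorem applied to each finite measure $|x^{\ast}\mu|$, and then to upgrade the pointwise-in-$x^{\ast}$ conclusion to the uniform one by using the semivariation of the vector measure $\widetilde{\nu}(E):=(HKL)\int_E g\,d\mu$ as a uniform tail-dominator. First I would use Definition~\ref{may001} to discard a single $\|\mu\|$-null set outside of which all $f_n$, $f$, $g$ are $\sigma(P)$-measurable, $|f_n|\le|g|$ holds pointwise, and $f_n\to f$ pointwise on $T$; since $|x^{\ast}\mu|\le\|x^{\ast}\|\cdot\|\mu\|$, this set is null for every scalar variation $|x^{\ast}\mu|$ as well.

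Fix $x^{\ast}\in X^{\ast}$ with $\|x^{\ast}\|\le 1$. The scalar $(HK)$ Dominated Convergence Theorem applied to the finite measure $|x^{\ast}\mu|$ with dominator $g$ delivers $(HK)$-integrability of every $f_n$ and of $f$ with respect to $|x^{\ast}\mu|$, together with
\[
\lim_{n\to\infty}(HK)\!\int_E f_n\,d|x^{\ast}\mu|=(HK)\!\int_E f\,d|x^{\ast}\mu|\quad\text{and}\quad \int_T|f_n-f|\,d|x^{\ast}\mu|\to 0
\]
for every $E\in\sigma(P)$. This already verifies condition (\ref{first}) in the definition of $(HKL)\mu$-integrability for each $f_n$ and for $f$.

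The main step is to make this uniform in $x^{\ast}$. By Theorem~\ref{bhanish}(2), $\widetilde{\nu}$ is a norm $\sigma$-additive vector measure on $\sigma(P)$; by Theorem~\ref{bhanish}(3)(a) its semivariation equals
\[
\|\widetilde{\nu}\|(E)=\sup_{\|x^{\ast}\|\le 1}\int_E|g|\,d|x^{\ast}\mu|,
\]
and by Theorem~\ref{bhanish}(3)(b) the submeasure $\|\widetilde{\nu}\|$ is continuous with respect to $\|\mu\|$. Given $\varepsilon>0$, Egoroff's theorem for the finite $\sigma$-subadditive submeasure $\|\mu\|$ yields $E_\varepsilon\in\sigma(P)$ on which $f_n\to f$ uniformly and with $\|\mu\|(T\setminus E_\varepsilon)$ so small that $\|\widetilde{\nu}\|(T\setminus E_\varepsilon)<\varepsilon$. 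For every $x^{\ast}$ with $\|x^{\ast}\|\le 1$, splitting the integral gives
\begin{align*}
\int_T|f_n-f|\,d|x^{\ast}\mu|&\le\|f_n-f\|_{\infty,E_\varepsilon}\,\|\mu\|(T)+2\!\int_{T\setminus E_\varepsilon}\!|g|\,d|x^{\ast}\mu|\\
&\le\|f_n-f\|_{\infty,E_\varepsilon}\,\|\mu\|(T)+2\|\widetilde{\nu}\|(T\setminus E_\varepsilon)<3\varepsilon
\end{align*}
once $n$ is large, and taking the supremum over $\|x^{\ast}\|\le 1$ produces the final displayed formula of the theorem.

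It remains to produce the vector integral of $f$ and identify the limit. The estimate above, applied with $f_m$ in place of $f$, shows that $x_E^{(n)}:=(HKL)\int_E f_n\,d\mu$ is Cauchy in $X$ uniformly in $E\in\sigma(P)$; completeness of $X$ supplies a limit $x_E$, and passing to the limit in $x^{\ast}(x_E^{(n)})=(HK)\int_E f_n\,d|x^{\ast}\mu|$ verifies condition (\ref{second}) for $f$, so $f\in\mathcal{H}(\mu)$ with $(HKL)\int_E f\,d\mu=x_E$ and $x_E^{(n)}\to x_E$ uniformly in $E$. The hard part is precisely the uniform Egoroff step: standard Egoroff only controls $\|\mu\|(T\setminus E_\varepsilon)$, and translating this into uniform smallness of $\sup_{\|x^{\ast}\|\le 1}\int_{T\setminus E_\varepsilon}|g|\,d|x^{\ast}\mu|$ requires both the identification of this supremum with a genuine vector-measure semivariation and its $\|\mu\|$-continuity, which are furnished by Theorem~\ref{bhanish}(3).
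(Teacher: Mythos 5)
Your proposal is correct and follows essentially the same route as the paper's proof: reduce to the scalar dominated convergence theorem for each $|x^{\ast}\mu|$, and then uniformize over the unit ball of $X^{\ast}$ by dominating the tail with the semivariation of $\nu(E)=(HKL)\int_E g\,d\mu$ (via Theorem~\ref{bhanish}(2)--(3)) together with an Egoroff-type uniform-convergence set, splitting each integral into the good set and its complement. The only divergences are technical: the paper runs the Egoroff--Lusin step with respect to a control measure $\mu_0$ for $\nu$ supplied by \cite[Corollary 5]{JD} rather than directly with respect to the continuous submeasure $\|\mu\|$ as you do, and it obtains the $(HKL)\,\mu$-integrability of $f$ by citing \cite[Theorem 2.1.5(vii)]{PANCHA} instead of your uniform-Cauchy-plus-completeness argument.
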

\begin{proof}
Let us consider that all the $\mu-$measurable functions are $\sigma(P)$ measurable. Let $\nu(E)=(HKL)\int_{E}gd|x^\ast \mu|,~E \in \sigma(P), $ then by  Theorem \ref{bhanish}(2), $\nu$ is $\sigma-$additive on $\sigma(P).$ Let $f_n,~n \in \mathbb{N}$ are $(HKL)-\mu-$integrable in $T.$ As $f_n \to f~\mu-$a.e. in $T$ then clearly $f$ is $\overline{\sigma(P)}$ measurable. This means $f$ is $\sigma(P)-$measurable. Moreover, $|f|\leq |g|~\mu-$a.e. in $T.$ From the \cite[Theorem 2.1.5(vii)]{PANCHA}, $f$ is $(HKL)\mu-$integrable in $T$  then there exists $M \in \sigma(P)$ with $||\mu||(E)=0$ such that $g$ is finite on $T\setminus M$ and $f_n(t) \to f(t)$ for $t \in T \setminus M.$ Let $F= \bigcup\limits_{n=1}^{\infty}N(f_n) \cap (T \setminus M)= \bigcup_{n=1}^{\infty}(f_n \chi_{T \setminus M})$ then $F \in \sigma(P)$ and $f_n \chi_{T \setminus M} \to f \chi_{T \setminus M}$ pointwise in $T.$ Using the \cite[Corollary 5]{JD}, $\nu$ has a control measure $\mu_0: \sigma(P) \to [0, \infty).$ Thus for $\varepsilon>0$ there exists a $\delta>0$ such that $\mu_0(E) < \delta$ implies $||\nu||(E)< \varepsilon.$ Now by Egoroff-Lusin Theorem there exists a set $N \in \sigma(P) \cap F$ with $\mu_0(N)=0$ and a sequence $\{F_k\}_{k=1}^{\infty} \subset P$ with $F_k $ converges in $F \setminus N$ such that $f_n \to f$ uniformly in each $F_k.$ As $\mu_0(F \setminus N \setminus F_k) \to 0$ there exists $k_0$ such that $\mu_0(F \setminus N \setminus F_k) < \delta$ for $k \geq k_0.$ Since $\mu_0(N)=0,~||\nu||(N)=0$ and $||\mu||(M)=0$ therefore $||\nu||(M)=0$ and  since$f_n \to f$ uniformly in $F_{k_0}$ then there exists $ n_0$ such that $||f_n -f||_{F_{k_0}}||\mu||(F_{k_0}) < \varepsilon~\forall ~n \geq n_0.$ For $E \in \sigma(P)$ and for $|x^\ast|\leq 1$ with \cite[ Proposition 4 \S 8]{Dinu}  with the fact $\mu_0(F \setminus N \setminus F_{k_0})<\delta$ and $||\nu||(N)= ||\nu||(M)=0$ we get
\begin{align*}
&\quad |x^\ast(HKL)\int_{E}f_n d \m-x^\ast(HKL)\int_{E}f d \mu|\\
&=|(HK)\int_{E}(f_n -f)d (|x^\ast \mu|)\\&\leq (HK)\int_{E}|f_n-f|d|\mu|(|x^\ast \mu|)\\&\leq (HK)\int_{E \cap F_{k_0}}|f_n-f|d|\mu|(|x^\ast \mu|)+(HK)\int_{E \cap (T \setminus F_{k_0})}|f_n -f|d |\mu|(|x^\ast \mu|)
\end{align*}
Finally from \cite[Proposition 8 \S 8]{Dinu} and the Theorem \ref{bhanish}(3) we have 
\begin{align*}
|(HKL)\int_{E}f_n d \mu -(HKL)\int_{E}f d\mu| &\leq \sup\limits_{|x^\ast|\leq 1}(HK)\int_{E}|f_n-f|d|\mu|(|x^\ast \mu|)\\&\leq \varepsilon
\end{align*}
for $n \geq n_o$ and for all $E \in \sigma(P).$
\end{proof}
\begin{thm}
\emph{(Bounded Convergence Theorem $\m$-a.e. convergence version)}: Let $P$ be a $\sigma-$ring. Let $f_n, n \in \N,$ be $\m-$measurable with values in $K$ or in $[-\infty, \infty]$ and let $K$ be a positive constant such that $|f_n(t)| \leq K~\m-$a.e. in $T$ for all $n.$ If $f: T \to K $ and $f_n(t) \to f(t)~\m-$a.e. in $T$ then $f, f_n , n \in \N$ are (HKL) $\m-$integrable in $T $ and $(HKL) \int_{E} f d m = \lim\limits_{n}(HKL)\int_{E} f_n d m ~{\textit{for}}~ E \in S,$  the limit being uniform with respect to $E \in S.$
\end{thm}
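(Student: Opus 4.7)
The plan is to derive this as an immediate corollary of the Dominated Convergence Theorem (Theorem~\ref{bhanish1}) by taking the constant function $g \equiv K$ as the dominating envelope: a uniform pointwise bound is just the special case of a pointwise majorant by an integrable function in which the majorant happens to be constant.

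First I would consolidate the exceptional null sets. For each $n$ there exists $N_n \in \sigma(P)$ with $\|\m\|(N_n) = 0$ outside of which $|f_n(t)| \leq K$, and an $\m$-null set $N_0$ outside of which $f_n(t) \to f(t)$. Since $\|\m\|$ is $\sigma$-subadditive on $\sigma(P)$, the union $N := N_0 \cup \bigcup_n N_n$ remains $\m$-null, and on $T \setminus N$ the bound $|f_n| \leq K$ and the convergence $f_n \to f$ hold pointwise for every $n$ simultaneously. Next I would verify that $g \equiv K$ lies in $\mcH(\m)$: $g$ is bounded and $\sigma(P)$-measurable, and $\|\m\|$ is finite on members of $\sigma(P)$ by Remark~\ref{finitesemivariation}, so the theorem on $\m$-essentially bounded $\m$-measurable functions stated just above yields (HKL) $\m$-integrability of $g$ with the estimate
$$\Big| (HKL) \int_B g\,dm \Big| \leq K\,\|\m\|(B), \quad B \in \sigma(P).$$

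With $g$ so chosen, the hypotheses of Theorem~\ref{bhanish1} are met: $|f_n| \leq |g|$ $\m$-a.e.\ in $T$ and $f_n \to f$ $\m$-a.e.\ in $T$. Invoking Theorem~\ref{bhanish1} then directly delivers that $f$ and each $f_n$ are (HKL) $\m$-integrable in $T$ and that
$$(HKL) \int_E f\,dm = \lim_{n \to \infty} (HKL) \int_E f_n\,dm$$
uniformly in $E \in S$, which is the claim. I expect the main delicate point to be the preliminary verification that the constant function $K$ is (HKL) $\m$-integrable on all of $T$ rather than merely on individual members of $P$: this is where the $\sigma$-ring hypothesis on $P$ together with the finite semivariation on $\sigma(P)$, and Definition~\ref{may001} in case $T \notin P$, must be combined to produce an integrable representative of $g$ on the whole of $T$. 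Once that technical check is disposed of, the remainder of the argument is a straightforward reduction to Theorem~\ref{bhanish1}, and in particular the uniformity in $E$ is inherited verbatim from the statement of the dominated convergence theorem.
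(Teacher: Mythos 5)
Your proposal is correct and follows essentially the same route as the paper: the paper's own proof is the one-line observation that, since $P$ is a $\sigma$-ring, constant functions are (HKL) $\m$-integrable in $T$, so the result follows from Theorem~\ref{bhanish1} with $g\equiv K$. Your version simply fills in the details the paper leaves implicit (consolidating the null sets via $\sigma$-subadditivity of $\|\m\|$ and justifying the integrability of the constant majorant through the essential-boundedness theorem and the finiteness of the semivariation), which is a welcome elaboration but not a different argument.
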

\begin{proof}
Since $P$ is a $\sigma-$ring and as the  constant functions are $(HKL)\mu-$integrable in $T$ so the proof follows from  Theorem \ref{bhanish1}.
\end{proof}
\section{Set valued Kluv\'{a}nek-Lewis-Henstock integral}
Let $HK(T, \mathbb{R}, |\mathcal{M}|)$ be the space of all functions $f: T \to \mathbb{R}$ which are $\Sigma-$measurable and $|\mathcal{M}|-$integrable.
\begin{Def}
Let $\mathcal{M}: \Sigma \to P_{wkc}(X)$ be a set-valued measure and $f: T \to \mathbb{R}$ an element of $HK(T, \mathbb{R}, |M|),~f~$ is said to be \emph{Kluv\'{a}nek-Lewis-Henstock integrable} with respect to $\mathcal{M}~$ (for short) $(KHL)\mathcal{M}-$integrable if 
\begin{enumerate}
\item $f$ is $\sigma(x^\ast, \mathcal{M}(.))-$integrable for each $x^\ast \in X^\ast.$
\item For each $A \in \Sigma,$ there exists a $W_{A}^{HK} \in P_{wkc}(X)$ such that 
\begin{eqnarray*}
\sigma(x^\ast, W_{A}^{HK})= (HK)\int_{A} f(w)d \sigma(x^\ast, \mathcal{M}(w)) \mbox{~for~each~}x^\ast \in X^\ast. 
\end{eqnarray*}
\end{enumerate}
In this case, we write $W_{A}^{HK}= (HKL)\int_{A}f(w)d \mathcal{M}(w)$ for each $A \in \Sigma$ and call it set-valued \emph{Kluv\'{a}nek-Lewis-Henstock integral} of $f$ with respect to $\mathcal{M}$ on $A.$
\end{Def}
\begin{rem}
The set-valued \emph{Kluv\'{a}nek-Lewis-Henstock integral} is generalization of the \emph{Kluv\'{a}nek-Lewis-Henstock integral}. When the set-valued measure $\mathcal{M}$ is degenerated into a vector-valued measure $\mu$ the set $W_{A}^{HK}$ is reduced to a vector in $X$ and the equality 
\begin{eqnarray*}
\sigma(x^\ast, W_{A}^{HK})=(HK)\int_{A}f(w)d \sigma(x^\ast, \mathcal{M}(w))
\end{eqnarray*}
turns into 
\begin{eqnarray*}
x^\ast(W_{A}^{HK})=(HK)\int_{A}f(w)dx^\ast\mu(w) \mbox{~for~each~}x^\ast \in X^\ast.
\end{eqnarray*}
\end{rem}
\begin{ex}
Let $\mathcal{M}: \Sigma \to P_{wkc}(X)$ be a set-valued measure. If $f= \sum\limits_{i=1}^{n}X_i \xi_{A_i}$ is a non-negative simple function, then $f$ is $(HKL) \mathcal{M}-$integrable and 
\begin{eqnarray}
(HKL)\int_{A} f(w)d\mathcal{M}(w)= \sum\limits_{i=1}^{n}\lambda_i(\mathcal{M}(A_i \cap A)) \mbox{~for~each~}A \in \Sigma.
\end{eqnarray}
\end{ex}
Some properties of the set-valued Kluv\'{a}nek-Lewis-Henstock integrals are 
\begin{thm}
Let $\mathcal{M}:\Sigma \to P_{wkc}(X)$ be a set-valued measure, $f,g: T \to \mathbb{R}(HKL)M-$integrable and $\lambda \geq 0.$ Then 
\begin{enumerate}
\item $f+g$ is $(HKL)\mathcal{M}-$integrable and 
\begin{eqnarray*}
(HKL)\int_{A}(f(w)+g(w))d\mathcal{M}(w)= (HKL)\int_{A}f(w)d \mathcal{M}(w)+ (HKL)\int_{A}g(w)d \mathcal{M}(w) \mbox{~for~each~} A \in \Sigma.
\end{eqnarray*}
\item $\lambda f$ is $(HKL)\mathcal{M}-$integrable and $(HKL)\int_{A}\lambda f(w)d \mathcal{M}(w)= \lambda (HKL)\int_{A}f(w)d\mathcal{M}(w)$ for each $A \in \Sigma.$
\end{enumerate}
\end{thm}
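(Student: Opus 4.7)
The plan is to produce explicit candidate sets for the two integrals (a Minkowski sum in part (1), a dilation in part (2)) and verify both that the candidates lie in $P_{wkc}(X)$ and that their support functionals satisfy the required $(HK)$-integral identities. The engine driving everything is the linearity of the Henstock--Kurzweil integral for scalar measures applied to the scalar measures $\sigma(x^{\ast}, \mathcal{M}(\cdot))$, combined with the support-function identities $\sigma(x^\ast, A+B) = \sigma(x^\ast, A) + \sigma(x^\ast, B)$ and $\sigma(x^\ast, \lambda A) = \lambda\,\sigma(x^\ast, A)$ for $\lambda \geq 0$ recorded in the Preliminaries.

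For part (1), set $W_A^f := (HKL)\int_A f\, d\mathcal{M}$ and $W_A^g := (HKL)\int_A g\, d\mathcal{M}$, and propose $U_A := W_A^f + W_A^g$ (Minkowski sum). First I would check $U_A \in P_{wkc}(X)$: convexity is immediate from convexity of the summands, and weak compactness follows because $U_A$ is the image of $W_A^f \times W_A^g$ (weakly compact by Tychonoff) under the weak-to-weak continuous map $(x,y) \mapsto x+y$. Then, using the support-function additivity, the defining property of $W_A^f$ and $W_A^g$, and linearity of the real-valued HK integral,
\begin{align*}
\sigma(x^\ast, U_A) &= \sigma(x^\ast, W_A^f) + \sigma(x^\ast, W_A^g)\\
&= (HK)\!\int_A f\, d\sigma(x^\ast, \mathcal{M}) + (HK)\!\int_A g\, d\sigma(x^\ast, \mathcal{M})\\
&= (HK)\!\int_A (f+g)\, d\sigma(x^\ast, \mathcal{M}),
\end{align*}
which is exactly the defining equation for $(HKL)\int_A (f+g)\, d\mathcal{M} = U_A$. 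The first condition in the definition, $\sigma(x^\ast, \mathcal{M}(\cdot))$-HK-integrability of $f+g$, also follows from HK-linearity applied to the separate integrabilities of $f$ and $g$.

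For part (2), the candidate is $\lambda W_A^f$, which is clearly in $P_{wkc}(X)$ since $x \mapsto \lambda x$ is a weak homeomorphism of $X$ and preserves convexity. Using $\sigma(x^\ast, \lambda A) = \lambda\,\sigma(x^\ast, A)$ (valid precisely because $\lambda \geq 0$), and pulling $\lambda$ through the HK integral, the defining identity follows immediately.

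The only non-trivial verification is the weak compactness of the Minkowski sum in part (1); this step relies on weak-to-weak continuity of addition and on Tychonoff's theorem for the product weak topology, so it is more conceptual than computational. The remaining manipulations are pure support-function bookkeeping, and the hypothesis $\lambda \geq 0$ in part (2) is essential to ensure $\sigma(x^\ast, \lambda A)$ is genuinely $\lambda \sigma(x^\ast, A)$ rather than requiring a sign change on the support functional.
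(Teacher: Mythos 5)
Your proposal is correct and follows essentially the same route as the paper: exhibit the Minkowski sum $W_A^f + W_A^g$ (resp. the dilate $\lambda W_A^f$) as the candidate integral, check membership in $P_{wkc}(X)$, and verify the defining support-function identity via $\sigma(x^\ast, A+B)=\sigma(x^\ast,A)+\sigma(x^\ast,B)$, $\sigma(x^\ast,\lambda A)=\lambda\sigma(x^\ast,A)$ and linearity of the scalar $(HK)$-integral. The only difference is that you spell out the weak compactness of the Minkowski sum (via weak continuity of addition on the product), which the paper leaves implicit.
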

\begin{proof}
(1) Since $f$ and $g$ are $(HKL)\mathcal{M}-$integrable, $f$ and $g$ are $\sigma(x^\ast, \mathcal{M}(.))-$integrable for each $x^\ast \in X^\ast$ and there exist $(HKL)\int_{A}f(w)d\mathcal{M}(w),~(HKL)\int_{A}g(w)d\mathcal{M}(w)$ in $P_{wkc}(X)$ such that 
\begin{eqnarray*}
\sigma(x^\ast, (HKL)\int_{A}f(w)d\mathcal{M}(w))&= \int_{A}f(w)d\sigma(x^\ast, \mathcal{M}(w))\\
\sigma(x^\ast, (HKL)\int_{A}g(w)d\mathcal{M}(w))&= \int_{A}g(w)d\sigma(x^\ast, \mathcal{M}(w))
\end{eqnarray*}
for each $A \in \Sigma$ and $x^\ast \in X^\ast.$ Obviously, $ f+g$ is $\sigma(x^\ast, \mathcal{M}(.))-$integrable and by properties of support functions, we have 
\begin{align*}
&\quad \sigma(x^\ast, (HKL)\int_{A}f(w)d\mathcal{M}(w)+(HKL)\int_{A}g(w)d\mathcal{M}(w))\\
&= \sigma(x^\ast, (HKL)\int_{A}f(w)d\mathcal{M}(w))+\sigma(x^\ast, (HKL)\int_{A}g(w)d\mathcal{M}(w))\\&= (HK)\int_{A}f(w)d\sigma(x^\ast, \mathcal{M}(w))+(HK)\int_{A}g(w)d\sigma(x^\ast, \mathcal{M}(w))\\&=(HK)\int_{A}\{f(w)+g(w)\}d\sigma(x^\ast, \mathcal{M}(w))
\end{align*}
for each $A \in Sigma$ and $x^\ast \in X^\ast.$  \\
Since $(HKL)\int_{A}f(w)d\mathcal{M}(w),~(HKL)\int_{A}g(w)d\mathcal{M}(w) \in P_{wkc}(X).$ It follows that $f+g$ is $(HKL)\mathcal{M}-$integrable and \begin{eqnarray*}
(HKL)\int_{A}(f(w)+g(w))d\mathcal{M}(w)= (HKL)\int_{A}f(w)d \mathcal{M}(w)+ (HKL)\int_{A}g(w)d \mathcal{M}(w)
\end{eqnarray*}
for each $A \in \Sigma.$\\ 
(2) \begin{align*}
\sigma(x^\ast, \lambda(HKL)\int_{A}f(w)d\mathcal{M}(w))&= \lambda \sigma(x^\ast, (HKL)\int_{A}f(w)d\mathcal{M})\\&=\lambda (HK)\int_{A}f(w)d\sigma(x^\ast, \mathcal{M}(w))\\&=(HK)\int_{A}\lambda f(w)d\sigma(x^\ast, \mathcal{M}(w)).
\end{align*}
For each $x^\ast \in X^\ast,~A \in \Sigma$ and $\lambda \geq 0.$ Since $(HKL)\int_{A}f(w)d\mathcal{M}(w) \in P_{wkc}(X),$ we have $\lambda (HKL)\int_{A}f(w)d\mathcal{M}(w) \in P_{wkc}(X).$ Thus $\lambda f$ is $(HKL)\mathcal{M}-$integrable and $\lambda f$ is $(HKL)\mathcal{M}-$integrable and $(HKL)\int_{A}\lambda f(w)d \mathcal{M}(w)= \lambda (HKL)\int_{A}f(w)d\mathcal{M}(w)$ for each $A \in \Sigma.$
\end{proof}
\begin{thm}
Let $(T, \Sigma, \mu)$ be a non negative finite measure space. If $\mathcal{M}:\Sigma \to P_{wkc}(X)$ is a $\mu-$continuous set valued measure and $f: T \to \mathbb{R}$ is $(HKL)\mathcal{M}-$integrable, then $\mathfrak{M}:\Sigma \to P_{wkc}(X)$ defined by $\mathfrak{M}(A)=(HKL)\int_{A}f(w)d\mathcal{M}(w)~\forall A \in \Sigma$ is $\mu-$continuous set-valued measure.
\end{thm}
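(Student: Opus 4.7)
The plan is to verify the two assertions separately: first that $\mathfrak{M}$ is a set-valued measure with values in $P_{wkc}(X)$, and second that it is $\mu$-continuous. Throughout, the unifying device is the support-function characterization that $\mathcal{N}:\Sigma\to P_{wkc}(X)$ is a set-valued measure if and only if $\sigma(x^{\ast},\mathcal{N}(\cdot))$ is a scalar-valued $\sigma$-additive measure on $\Sigma$ for every $x^{\ast}\in X^{\ast}$, which was recalled right after the definition of set-valued measure.

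First I would note that $\mathfrak{M}(A)\in P_{wkc}(X)$ for every $A\in\Sigma$ is built into the definition of $(HKL)\mathcal{M}$-integrability, and that $\mathfrak{M}(\emptyset)=\{0\}$ because the support function of $\mathfrak{M}(\emptyset)$ equals the Henstock-Kurzweil integral of $f$ over $\emptyset$ with respect to the scalar measure $\sigma(x^{\ast},\mathcal{M}(\cdot))$, which is $0$. For $\sigma$-additivity, fix $x^{\ast}\in X^{\ast}$ and let $\nu_{x^{\ast}}(A):=\sigma(x^{\ast},\mathcal{M}(A))$, a $\sigma$-additive scalar measure. By hypothesis $f$ is $(HK)$-integrable with respect to $\nu_{x^{\ast}}$, and by Proposition~\ref{may002} (applied to the indefinite integral of a $\nu_{x^{\ast}}$-integrable function), the set function
\[
A\longmapsto (HK)\int_{A}f\,d\nu_{x^{\ast}}=\sigma\bigl(x^{\ast},\mathfrak{M}(A)\bigr)
\]
is $\sigma$-additive on $\Sigma$. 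Since this holds for every $x^{\ast}\in X^{\ast}$, the support-function criterion delivers the $\sigma$-additivity of $\mathfrak{M}$ in the sense of the definition of set-valued measure; the unconditional (norm) convergence of the representing selections on disjoint unions is handled by the Orlicz-Pettis theorem (Theorem~\ref{may2}), applied exactly as in the proof of Theorem~\ref{bhanish}(2).

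Next I would prove $\mu$-continuity. Suppose $\mu(A)=0$. The $\mu$-continuity of $\mathcal{M}$ yields $\mathcal{M}(A)=\{0\}$, hence $\sigma(x^{\ast},\mathcal{M}(A))=0$ for every $x^{\ast}\in X^{\ast}$; equivalently, $|\sigma(x^{\ast},\mathcal{M}(\cdot))|\ll\mu$ for every $x^{\ast}$. The Henstock-Kurzweil integral of $f$ with respect to a scalar measure vanishes on sets of measure zero (this follows from the characterization recalled before Theorem~\ref{may2} that $(HK)\int_{A}f\,dm=0$ for all $A$ iff $f=0$ $m$-a.e., together with additivity over subsets), so
\[
\sigma\bigl(x^{\ast},\mathfrak{M}(A)\bigr)=(HK)\int_{A}f\,d\sigma(x^{\ast},\mathcal{M}(\cdot))=0
\qquad\text{for every }x^{\ast}\in X^{\ast}.
\]
Because $\mathfrak{M}(A)$ is weakly compact and convex, vanishing of its support function on all of $X^{\ast}$ forces $\mathfrak{M}(A)=\{0\}$ by the Hahn-Banach theorem. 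This is exactly $\mu$-continuity of $\mathfrak{M}$.

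The main obstacle I anticipate is the $\sigma$-additivity step: the series $\sum_{n}\mathfrak{M}(A_{n})$ must be interpreted via unconditional norm convergence of selections, not merely via pointwise equality of support functions. So one cannot stop at the scalar identity $\sigma(x^{\ast},\mathfrak{M}(\bigcup A_{n}))=\sum_{n}\sigma(x^{\ast},\mathfrak{M}(A_{n}))$; one must upgrade weak subseries convergence of the selections to norm unconditional convergence, which is where Theorem~\ref{may2} (Orlicz-Pettis) plays the decisive role, mirroring the argument indicated in Theorem~\ref{bhanish}(2). Once that upgrade is in place, the rest is bookkeeping with support functions.
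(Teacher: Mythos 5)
Your proposal follows essentially the same route as the paper: both reduce everything to the support-function characterization of set-valued measures, showing $\sigma(x^{\ast},\mathfrak{M}(\emptyset))=0$ and that $A\mapsto (HK)\int_{A}f\,d\sigma(x^{\ast},\mathcal{M}(\cdot))$ is $\sigma$-additive for each $x^{\ast}\in X^{\ast}$ (via the $\sigma$-additivity of the indefinite integral of an integrable function), and then conclude that $\mathfrak{M}$ is a set-valued measure. If anything you are more complete than the paper, which never explicitly verifies the $\mu$-continuity of $\mathfrak{M}$ after establishing scalar $\sigma$-additivity, whereas you supply the null-set argument and the Hahn--Banach step forcing $\mathfrak{M}(A)=\{0\}$, and you explicitly flag the Orlicz--Pettis upgrade from weak to unconditional norm convergence that the paper leaves implicit.
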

\begin{proof}
Clearly 
\begin{align*}
\sigma(x^\ast, \mathfrak{M}(\emptyset))&=\sigma(x^\ast, (HKL)\int_{\emptyset}f(w)d\mathcal{M}(w))\\&=(HK)\int_{\emptyset}f(w)d \sigma(x^\ast, \mathcal{M}(w))\\&=0, {~for~each~}x^\ast \in X^\ast.
\end{align*}
Also, 
\begin{align*}
\sigma(x^\ast, \mathfrak{M}(A))&= \sigma(x^\ast, (HKL)\int_{A}f(w)d\mathcal{M}(w))\\&= (HK) \int_{A}f(w)d \sigma (x^\ast, \mathcal{M}(w)).
\end{align*}
If $\sigma(x^\ast, \mcM(.))$ is a $\mu-$continuous scalar-valued measure for each $x^\ast \in X^\ast,$ then for each $x^\ast \in X^\ast$ and $A \in \Sigma, $ we have
\begin{align*}
\sigma(x^\ast, \mathfrak{M}(A))&= \sigma(x^\ast, (HKL)\int_{A}f(w)d\mcM(w))\\&= (HK)\int_{A}f(w)d \sigma (x^\ast, \mcM(w)).
\end{align*}
Since $f$ is $\sigma(x^\ast, \mcM(w))-$integrable then $A \to (HK)\int_{A}f(w)d\sigma(x^\ast, \mcM(w))$ is countable additive. Let us assume $\{A_n\}_{n \in \mathbb{N}}$ be a sequence of pair wise disjoint elements of $\Sigma,$ then 
\begin{align*}
\sigma(x^\ast, \mathfrak{M}(\cup_{n=1}^{\infty}A_n) &= \sigma(x^\ast, (HKL)\int_{\cup_{n=1}^{\infty}A_n}f(w)d\mcM(w))\\&=(HK)\int_{\cup_{n=1}^{\infty}A_n}f(w)d\sigma (x^\ast, \mcM(w))\\&=\sum\limits_{n=1}^{\infty}(HK)\int_{A_n}f(w)d \sigma(x^\ast, \mcM(w))\\&= \sum\limits_{n=1}^{\infty} \sigma(x^\ast, (HKL)\int_{A_n}f(w)d \mcM(w))\\&= \sum\limits_{n=1}^{\infty}\sigma(x^\ast, \mathfrak{M}(A_n)).
\end{align*}
Therefore $\sigma(x^\ast, \mathfrak{M}(.))$ is a scalar-valued measure for each $x^\ast \in X^\ast.$ Hence $\mathfrak{M}:\Sigma \to P_{wkc}(X)$ is a continuous set-valued measure.
\end{proof}
\begin{thm}
Suppose $X$ has the RNP, $X^\ast$ is separable, $\mcM: \Sigma \to P_{wkc}(X)$ is a $\mu-$continuous set-valued measure of bounded variation and $f: T \to \mathbb{R}^{+}$ is $(HKL)\mcM-$integrable then there exists an integrably bounded set-valued function $X: T \to P_{wkc}(X) $ such that 
\begin{eqnarray*}
(HKL)\int_{T}f(w)d\mcM(w)=(HK)\int_{T}f(w)X(w)d \mu(w).
\end{eqnarray*}
\end{thm}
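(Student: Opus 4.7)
The plan is to prove this in essentially two moves: first produce a multivalued Radon--Nikod\'ym density $X(\cdot)$ of $\mcM$ with respect to $\mu$, then reduce the claimed equality to an identity of support functions which follows from the scalar change--of--density formula for the Henstock--Kurzweil integral.

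\textbf{Step 1 (Set-valued Radon--Nikod\'ym).} Under the standing hypotheses (RNP of $X$, separability of $X^\ast$, $\mu$-continuity and bounded variation of $\mcM$, with $\mcM(A)\in P_{wkc}(X)$), I would invoke the Hiai--Umegaki / Cost\'e Radon--Nikod\'ym theorem for multifunctions. This gives an integrably bounded set-valued function $X:T\to P_{wkc}(X)$ such that
\begin{eqnarray*}
\mcM(A)=\int_{A}X(w)\,d\mu(w),\qquad A\in\Sigma,
\end{eqnarray*}
where the integral is understood in the Aumann sense. Equivalently, at the level of support functions (which is what I will actually need),
\begin{eqnarray*}
\sigma(x^\ast,\mcM(A))=\int_{A}\sigma(x^\ast,X(w))\,d\mu(w),\qquad x^\ast\in X^\ast,\ A\in\Sigma.
\end{eqnarray*}

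\textbf{Step 2 (Reduction via support functions).} Fix $x^\ast\in X^\ast$. By the very definition of $(HKL)\mcM$-integrability of $f$,
\begin{eqnarray*}
\sigma\bigl(x^\ast,(HKL)\!\int_{T}f(w)\,d\mcM(w)\bigr)=(HK)\!\int_{T}f(w)\,d\sigma(x^\ast,\mcM(w)).
\end{eqnarray*}
Now apply the scalar change-of-density formula for the Henstock--Kurzweil integral: since the scalar measure $\sigma(x^\ast,\mcM(\cdot))$ has $\mu$-density $\sigma(x^\ast,X(\cdot))$ by Step~1, one obtains
\begin{eqnarray*}
(HK)\!\int_{T}f(w)\,d\sigma(x^\ast,\mcM(w))=(HK)\!\int_{T}f(w)\,\sigma(x^\ast,X(w))\,d\mu(w).
\end{eqnarray*}
Because $f\geq 0$, positive homogeneity of support functions gives $f(w)\sigma(x^\ast,X(w))=\sigma(x^\ast,f(w)X(w))$, and then the Aumann-type definition of the right-hand set-valued integral yields
\begin{eqnarray*}
(HK)\!\int_{T}\sigma(x^\ast,f(w)X(w))\,d\mu(w)=\sigma\bigl(x^\ast,(HK)\!\int_{T}f(w)X(w)\,d\mu(w)\bigr).
\end{eqnarray*}
Chaining these identities gives equality of support functions at every $x^\ast\in X^\ast$; since both sets lie in $P_{wkc}(X)$, the Hahn--Banach separation theorem forces them to coincide as sets.

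\textbf{Main obstacle.} The real work lies in Step~1 and the density transfer opening Step~2. For Step~1 one must verify that the Hiai--Umegaki hypotheses are met by our $\mcM$: bounded variation plus $\mu$-continuity transfers, via support functionals, to scalar vector measures $x^\ast\mapsto\sigma(x^\ast,\mcM(\cdot))$, each of which is $\mu$-absolutely continuous with bounded variation, so scalar Radon--Nikod\'ym derivatives $h_{x^\ast}$ exist; separability of $X^\ast$ then lets one glue these into a measurable $X(\cdot)$ with $\sigma(x^\ast,X(w))=h_{x^\ast}(w)$ $\mu$-a.e., while RNP of $X$ combined with bounded variation gives integrable boundedness. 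For the density transfer in Step~2 one needs that Henstock--Kurzweil integrability of $f$ with respect to $\sigma(x^\ast,\mcM(\cdot))$ is equivalent to Henstock--Kurzweil integrability of $f\cdot\sigma(x^\ast,X(\cdot))$ with respect to $\mu$ with equal integrals; this is the HK analogue of the classical change-of-density formula, where one reproves it by a gauge argument (or invokes it from the literature on HK integration against measures possessing densities) using that $\sigma(x^\ast,X(\cdot))$ is $\mu$-integrable thanks to the integrable boundedness of $X$.
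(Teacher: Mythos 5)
Your proposal follows essentially the same route as the paper's proof: obtain an integrably bounded set-valued Radon--Nikod\'ym derivative $X(\cdot)$ of $\mcM$ with respect to $\mu$, pass to support functions to see that $\sigma(x^\ast,X(\cdot))$ is the scalar density of $\sigma(x^\ast,\mcM(\cdot))$, apply the change-of-density formula, use $f\geq 0$ to pull $f$ inside the support function, and conclude from equality of support functions on $P_{wkc}(X)$. Your version is, if anything, more explicit than the paper's about which multivalued Radon--Nikod\'ym theorem is being invoked and about the final separation argument.
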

\begin{proof}
Since $X$ has the RNP, $X^\ast$ is separable, $\mcM: \Sigma \to P_{wkc}(X)$ is a $\mu-$continuous set-valued measure of bounded variation then there exists an integrably bounded set-valued function $X: T \to P_{wkc}(X)$ which is a Radon Nikod\'{y}m derivative of $\mcM$ with respect to $\mu,$ then $\mcM(A)=(HK)\int_{A}X(w)d \mu(w)$ for each $A \in \Sigma.$ It follows that 
\begin{eqnarray}\label{may}
\sigma(x^\ast, \mcM(A))= \sigma(x^\ast, (HK)\int_{A}X(w)d \mu(w))= (HK)\int_{A} \sigma(x^\ast, X(w))d \mu(w)
\end{eqnarray}
for each $x^\ast \in X^\ast$ and each $A \in \Sigma.$

Now from the fact that $\mcM: \Sigma \to P_{wkc}(X)$ is $\nu-$continuous set-valued measure if and only if $\sigma(x^\ast, \mcM(.))$ is $\mu-$continuous scalar valued measure for each $x^\ast \in X^\ast.$ This fact with (\ref{may}) gives $\sigma(x^\ast, X(w))$ is a Radon-Nikod\'{y}m derivative of $\sigma(x^\ast, \mcM(.))$ with respect to $\mu$ for each $x^\ast \in X^\ast.$ Therefore 
\begin{eqnarray}\label{may1}
\sigma(x^\ast, (HKL)\int_{T}f(w)d \mcM(w))&= (HK)\int_{T}f(w)d \sigma(x^\ast, \mcM(w))\\&= (HK) \int_{T}f(w)\sigma(x^\ast, X(w))d \mu(w)
\end{eqnarray}
for each $x^\ast \in X^\ast.$ Since $f$ is non-negative from \cite[Theorem 8]{Cai}, we have 
\begin{align*}
(HK)\int_{T}f(w)\sigma(x^\ast, X(w))d\mu (w)&= (HK)\int_{T}\sigma(x^\ast, f(w)X(w))d\mu(w)\\&= \sigma(x^\ast, (HK)\int_{T}f(w)X(w)d\mu(w)
\end{align*}
for each $x^\ast \in X^\ast.$ From (\ref{may1}) 
\begin{eqnarray*}
(HKL)\int_{T}f(w)d\mcM(w)=(HK)\int_{T}f(w)X(w)d \mu(w).
\end{eqnarray*}
\end{proof}
\begin{thm}\label{may01}
Let $\mcM: \Sigma \to P_{wkc}(X)$ be a set-valued measure and let $f: T \to \mathbb{R}$ be $(HKL)\mcM-$integrable then 
\begin{eqnarray*}
|(HKL)\int_{A}f(w)d\mcM(w)| \leq (HK)\int_{A}|f(w)|d(w)
\end{eqnarray*}
for each $A \in \Sigma.$
\end{thm}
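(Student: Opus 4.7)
The plan is to reduce the set-valued inequality to a family of scalar-valued Henstock--Kurzweil inequalities by passing through the support functional $\sigma(x^\ast,\cdot)$, and then to control the scalar variation $|\sigma(x^\ast,\mcM)|$ by the set-valued variation $|\mcM|$ uniformly in the unit ball of $X^\ast$.

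First I would use the standard description of the norm of a bounded, weakly compact, convex subset $W\subset X$, namely
$$|W|=\sup_{w\in W}\|w\|=\sup_{\|x^\ast\|\leq 1}\sigma(x^\ast,W),$$
which follows from Hahn--Banach. Applying this to $W=(HKL)\int_A f\,d\mcM$ and then invoking the defining identity of the $(HKL)\mcM$-integral yields
$$\Bigl|(HKL)\int_A f(w)\,d\mcM(w)\Bigr|=\sup_{\|x^\ast\|\leq 1}\sigma\Bigl(x^\ast,(HKL)\int_A f\,d\mcM\Bigr)=\sup_{\|x^\ast\|\leq 1}(HK)\int_A f(w)\,d\sigma(x^\ast,\mcM(w)).$$

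Second, for each fixed $x^\ast\in X^\ast$ with $\|x^\ast\|\leq 1$, the scalar set function $\sigma(x^\ast,\mcM(\cdot))$ is a scalar measure, and $f$ is Henstock--Kurzweil integrable against it. I would apply the standard triangle inequality for the Henstock--Kurzweil integral with respect to a (possibly signed) scalar measure to get
$$\Bigl|(HK)\int_A f(w)\,d\sigma(x^\ast,\mcM(w))\Bigr|\leq (HK)\int_A |f(w)|\,d|\sigma(x^\ast,\mcM)|(w).$$

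The key comparison step, which I expect to be the main obstacle, is to bound $|\sigma(x^\ast,\mcM)|(B)\leq |\mcM|(B)$ uniformly for $\|x^\ast\|\leq 1$, where $|\mcM|$ is the (set-valued) variation. Given a finite measurable partition $\{B_i\}$ of $B$, pick any selections $y_i\in\mcM(B_i)$; since $\sigma(x^\ast,\mcM(B_i))=\sup_{y\in\mcM(B_i)}x^\ast(y)$, for each $i$ we have
$$|\sigma(x^\ast,\mcM(B_i))|\leq \sup_{y\in\mcM(B_i)}|x^\ast(y)|\leq \sup_{y\in\mcM(B_i)}\|y\|=|\mcM(B_i)|.$$
Summing over $i$ and passing to the supremum over all partitions gives the desired domination. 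Combining this with the preceding display and the monotonicity of the Henstock--Kurzweil integral with respect to its underlying non-negative measure,
$$(HK)\int_A |f(w)|\,d|\sigma(x^\ast,\mcM)|(w)\leq (HK)\int_A |f(w)|\,d|\mcM|(w),$$
uniformly for $\|x^\ast\|\leq 1$. Taking the supremum over $\|x^\ast\|\leq 1$ on the left-hand side of the chain and combining with the first step completes the proof.
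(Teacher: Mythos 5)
Your proposal is correct and follows essentially the same route as the paper's own proof: it expresses the norm of the set-valued integral as $\sup_{\|x^\ast\|\leq 1}|\sigma(x^\ast,\cdot)|$ (the Hausdorff distance to $\{0\}$), invokes the defining identity of the $(HKL)\mcM$-integral, applies the scalar triangle inequality, and then dominates $|\sigma(x^\ast,\mcM)|(B)$ by $|\mcM|(B)$ partition-by-partition, uniformly over the unit ball of $X^\ast$. The bound you arrive at has $d|\mcM|(w)$ on the right-hand side, which is the only sensible reading of the statement's typographically garbled ``$d(w)$'' and is precisely what the paper's own proof establishes before its final, unjustified extra inequality.
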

\begin{proof}
As $f$ is $(HKL)\mcM-$integrable, there exists $(HKL)\int_{A}f(w)d \mcM(w) \in P_{wkc}(X)$ such that 
\begin{align*}
\sigma(x^\ast, (HKL)\int_{A}f(w)d\mcM(w))&=(HK)\int_{A}f(w)d \sigma(x^\ast, \mcM(w))
\end{align*}
for each $A \in \Sigma$ and $x^\ast \in X^\ast.$ From the fact of support function
\begin{align*}
|(HKL)\int_{A}f(w)d\mcM(w)|&=H((HKL)\int_{A}f(w)d\mcM(w), \{0\})\\&=\sup\limits_{||x^\ast||\leq 1}|\sigma(x^\ast, (HKL)\int_{A}f(w)d\mcM(w)|\\&= \sup\limits_{||x^\ast||\leq 1}|(HK)\int_{A}f(w)d\sigma(x^\ast, \mcM(w)|\\&\leq \sup\limits_{||x^\ast||\leq 1}(HK)\int_{A}|f(w)|d|\sigma(x^\ast, \mcM(.)|(w).
\end{align*}
Now,
\begin{align*}
|\sigma(x^\ast, \mcM(0))|(A)&= \sup\sum\limits_{i=1}^{n}|\sigma(x^\ast, \mcM(A_i)|\\&\leq \sup\sum\limits_{i=1}^{n}||x^\ast|||\mcM(A_i)|\\&\leq \sup\sum\limits_{i=1}^{n}|\mcM(A_i)|\\&=|\mcM|(A)
\end{align*}
for each $A \in \Sigma$ and $x^\ast \in X^\ast$ with $||x^\ast||\leq 1,$ where the supremum is taken over all finite measurable partition $\{A_1, A_2,..,A_n\}$ of $A,$ as $f$ is non negative so $$(HK)\int_{A}|f(w)|d|\sigma(x^\ast, \mcM(.))|(w)\leq (HK)\int_{A}|f(w)d|\mcM|(w)$$ for each $A \in \Sigma $ and $x^\ast \in X^\ast$ with $||x^\ast||\leq 1.$ Therefore 
\begin{align*}
|(HKL)\int_{A}f(w)d \mcM(w)| &\leq \sup\limits_{||x^\ast||\leq 1}(HK)\int_{A}\int_{A}|f(w)|d|\sigma(x^\ast, \mcM(w))|\\&\leq (HK)\int_{A}|f(w)|d|\mcM|(w)\\&\leq (HK)\int_{A}|f(w)|d(w)
\end{align*}
for each $A \in \Sigma.$
\end{proof}
\begin{cor}
Let $\mcM: \Sigma \to P_{wkc}(X)$ be a set-valued measurable and let $f: T \to \mathbb{R}$ be $(HKL)\mcM-$integrable then 
\begin{enumerate}
\item $\lim\limits_{A \to \emptyset}H((HKL)\int_{A}f(w)d\mcM(w), \{0\})=0.$
\item If $f= \emptyset$ then $(HKL)\int_{A}f(w)\mcM(w)=\{0\}.$
\item If $f=0$ then $(HKL)\int_{A}f(w)d\mcM(w)=\{0\}$ for each $A \in \Sigma.$
\end{enumerate}
 \end{cor}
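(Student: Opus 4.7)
The proof strategy is to extract all three items directly from the inequality established in Theorem~\ref{may01}, namely
\[
\Bigl|(HKL)\int_{A}f(w)\,d\mcM(w)\Bigr| \le (HK)\int_{A}|f(w)|\,d(w),
\]
together with the observation recorded just after the definition of the Hausdorff metric, that for a convex set $C \in P_{wkc}(X)$ one has $H(C,\{0\}) = \sup_{\|x^\ast\| \le 1}|\sigma(x^\ast,C)| = |C|$. Thus the Hausdorff distance to the origin coincides with the norm $|C|$, and every conclusion reduces to showing that the scalar bound on the right-hand side above is zero or tends to zero.

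For item (3), if $f\equiv 0$ on $T$ then $(HK)\int_{A}|f(w)|\,d(w)=0$ for every $A\in\Sigma$, so the bound forces $H\bigl((HKL)\int_{A}f\,d\mcM,\{0\}\bigr)=0$; since the integral lies in $P_{wkc}(X)$, this yields the set equality $(HKL)\int_{A}f(w)\,d\mcM(w)=\{0\}$. Item (2), which I read as the special case $A=\emptyset$ (the formal reading ``$f=\emptyset$'' being a typographical slip), follows by the same argument because $(HK)\int_{\emptyset}|f|\,d(w)=0$; alternatively, since $\mcM(\emptyset)=\{0\}$ one has $\sigma(x^\ast,\mcM(\emptyset))=0$, so $(HK)\int_{\emptyset}f\,d\sigma(x^\ast,\mcM(w))=0$ for every $x^\ast\in X^\ast$, which by the defining property of the set-valued integral forces its support function to vanish identically, whence the integral equals $\{0\}$.

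The substantive part is item (1). Since $f$ is $(HKL)\mcM$-integrable, in particular $|f|$ is Henstock-Kurzweil integrable against the underlying scalar control $|\mcM|$, and the scalar analogue of Theorem~\ref{bhanish}(2) (via Propositions~\ref{may02} and \ref{may002}) guarantees that the set function $\nu(A):=(HK)\int_{A}|f(w)|\,d(w)$ is $\sigma$-additive on $\sigma(P)$. A non-negative $\sigma$-additive set function is continuous at $\emptyset$, so $\nu(A)\to 0$ as $\|\mcM\|(A)\to 0$. Applying Theorem~\ref{may01} gives
\[
\lim_{A \to \emptyset} H\bigl((HKL)\textstyle\int_{A}f(w)\,d\mcM(w),\{0\}\bigr) \le \lim_{A \to \emptyset}(HK)\textstyle\int_{A}|f(w)|\,d(w) = 0.
\]
The main obstacle is therefore not analytic but notational: pinning down the meaning of ``$A \to \emptyset$'' in the statement. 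Once this is fixed as continuity at $\emptyset$ with respect to the semivariation $\|\mcM\|$, the result is an immediate corollary of Theorem~\ref{may01} combined with the $\sigma$-additivity of the scalar Henstock-Kurzweil integral, both already available in the paper.
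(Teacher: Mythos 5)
The paper gives no proof of this corollary, treating it as an immediate consequence of the preceding theorem (Theorem~\ref{may01}), and your argument is exactly that intended derivation: all three items follow from the bound $\bigl|(HKL)\int_{A}f\,d\mcM\bigr|=H\bigl((HKL)\int_{A}f\,d\mcM,\{0\}\bigr)\le (HK)\int_{A}|f|\,d|\mcM|$, together with $\sigma$-additivity (hence continuity at $\emptyset$) of the dominating scalar integral for item (1) and closedness of sets in $P_{wkc}(X)$ to pass from zero Hausdorff distance to the set equality in items (2) and (3). Your reading of the garbled hypothesis ``$f=\emptyset$'' in item (2) as ``$A=\emptyset$'' is the sensible repair, and the proof is correct.
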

\begin{thm}
Let $\mcM: \Sigma \to P_{wkc}(X)$ be a set-valued measure and let $f, ~f_n: T \to \mathbb{R},~n \in \mathbb{N}$ be $(HKL)\mcM-$integrable such that $\{f_n\}_{n \in \mathbb{N}}$ is uniformly integrable with respect to $|\mcM|$ and $\lim\limits_{n \to \infty}f_n =f,~|\mcM|-$a.e., then 
\begin{eqnarray*}
\lim\limits_{n \to \infty}H((HKL)\int_{T}f_n(w)d\mcM(w), (HKL)\int_{T}f(w)d\mcM(w))=0.
\end{eqnarray*}
\end{thm}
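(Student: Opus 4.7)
The plan is to transfer the Hausdorff-distance estimate to a scalar-valued Henstock--Kurzweil integral and then apply a Vitali-type convergence theorem. Both $(HKL)\int_T f_n\,d\mcM$ and $(HKL)\int_T f\,d\mcM$ lie in $P_{wkc}(X)$ by definition of the set-valued integral, so by the support-function formula for the Hausdorff distance on convex sets recalled at the end of the Preliminaries,
\begin{eqnarray*}
H\!\left((HKL)\!\int_T f_n\,d\mcM,\,(HKL)\!\int_T f\,d\mcM\right) = \sup_{\|x^\ast\|\le 1}\left|\sigma\!\left(x^\ast,(HKL)\!\int_T f_n\,d\mcM\right) - \sigma\!\left(x^\ast,(HKL)\!\int_T f\,d\mcM\right)\right|.
\end{eqnarray*}
Substituting the defining identity $\sigma(x^\ast,(HKL)\int_T g\,d\mcM) = (HK)\int_T g\,d\sigma(x^\ast,\mcM)$ reduces the problem to showing $\sup_{\|x^\ast\|\le 1}|(HK)\int_T (f_n-f)\,d\sigma(x^\ast,\mcM)|\to 0$.

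Next I would dominate this exactly as in the proof of Theorem~\ref{may01}: pass to the total variation of the signed scalar measure $\sigma(x^\ast,\mcM(\cdot))$ and use the bound $|\sigma(x^\ast,\mcM(\cdot))|(A)\le|\mcM|(A)$ established there, valid for $\|x^\ast\|\le 1$. This yields the uniform dominating estimate
\begin{eqnarray*}
H\!\left((HKL)\!\int_T f_n\,d\mcM,\,(HKL)\!\int_T f\,d\mcM\right) \le (HK)\!\int_T |f_n-f|\,d|\mcM|,
\end{eqnarray*}
which eliminates the dependence on $x^\ast$ and reduces matters to a single scalar HK-integral with respect to the non-negative set function $|\mcM|$.

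The final step is to run a Vitali-type argument for the HK-integral with respect to $|\mcM|$, along the same lines as the proof of Theorem~\ref{bhanish1}. Given $\varepsilon>0$, I would pick a $\sigma$-additive control measure $\mu_0$ for $|\mcM|$ (using the argument invoked in that proof via \cite[Corollary 5]{JD}) and split $T$, via the Egoroff--Lusin theorem applied to $\mu_0$, into a good set $F_{k_0}$ on which $f_n\to f$ uniformly, and a bad complement of arbitrarily small $|\mcM|$-measure. Uniform convergence on $F_{k_0}$ handles that part; the uniform integrability of $\{f_n\}_{n\in\mathbb{N}}$ with respect to $|\mcM|$, together with the integrability of $f$, controls the bad-part contribution, since $(HK)\int_B(|f_n|+|f|)\,d|\mcM|$ can be made smaller than $\varepsilon$ uniformly in $n$ once $|\mcM|(B)$ is small enough. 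Combining the two pieces yields $(HK)\int_T|f_n-f|\,d|\mcM|\to 0$, and plugging this back into the Hausdorff estimate completes the argument.

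The main obstacle I expect is precisely this last step: replacing the pointwise Lebesgue-type dominating function used in Theorem~\ref{bhanish1} by the weaker uniform integrability hypothesis, while simultaneously working with the Henstock--Kurzweil integral and with the only $\sigma$-subadditive (rather than $\sigma$-additive) set function $|\mcM|$. This will require fixing a genuine $\sigma$-additive control measure on $\Sigma$ to legitimately invoke Egoroff--Lusin, and then translating the resulting tail estimates back in terms of $|\mcM|$ through the uniform integrability assumption; the support-function reduction of the first two paragraphs is, by contrast, a routine unfolding of the definitions.
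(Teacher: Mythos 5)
Your proposal follows essentially the same route as the paper's proof: the identical support-function reduction of the Hausdorff distance, the same domination via $|\sigma(x^\ast,\mcM(\cdot))|(A)\le|\mcM|(A)$ yielding the bound by $(HK)\int_T|f_n-f|\,d|\mcM|$, and the same final appeal to a Vitali-type convergence theorem. The only difference is that the paper simply cites ``Vitali's Convergence Theorem'' at that last step, whereas you sketch how one would actually prove it via a control measure and Egoroff--Lusin; your version is, if anything, more careful about the point the paper glosses over.
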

\begin{proof}
Given $f,~f_n: T \to \mathbb{R},~n \in \mathbb{N}$ are $(HKL)\mcM-$integrable, then there exist $(HKL)\int_{T}f_n(w)d\mcM(w),~(HKL)\int_{T}f(w)d\mcM(w) \in P_{wkc}(X),~n \in \mathbb{N}$ such that 
\begin{eqnarray*}
\sigma(x^\ast, (HKL)\int_{T}f_n(w)d\mcM(w))=(HK)\int_{T}f_n(w)d \sigma(x^\ast, \mcM(w))
\end{eqnarray*}
and \begin{eqnarray*}
\sigma(x^\ast, (HKL)\int_{T}f(w)d\mcM(w))=(HK)\int_{T}f(w)d \sigma(x^\ast, \mcM(w))
\end{eqnarray*}
for each $x^\ast \in X^\ast$ and $n \in \mathbb{N}.$ The properties of support function gives:
\begin{align*}
& H((HKL)\int_{T}f_n(w)d\mcM(w), (HKL)\int_{T}f(w)d\mcM(w))\\&=\sup\limits_{||x^\ast||\leq 1 }|\sigma(x^\ast, (HKL)\int_{T}f_n(w)d \mcM(w))-\sigma(x^\ast, (HKL)\int_{T}f(w)d\mcM(w))|\\&=\sup\limits_{||x^\ast||\leq 1}|(HK)\int_{T}f_n(w)d\sigma(x^\ast, \mcM(w))- (HK)\int_{T}f(w)d \sigma(x^\ast, \mcM(w)|\\&\leq \sup\limits_{||x^\ast||\leq 1}(HK)\int_{T}|f_n(w)-f(w)|d |\sigma(x^\ast, \mcM(.))|(w)\\&\leq (HK)\int_{T}|f_n(w)-f(w)|d |\mcM|(w).
\end{align*}
Now using Vitali's Convergence Theorem
\begin{eqnarray*}
H((HKL)\int_{T}f_n(w), d \mcM(w), (HKL)\int_{T}f(w)d \mcM(w))\\ \leq (HK)\int_{T}|f_n(w)-f(w)|d |\mcM|(w) ~\to 0~ as ~n \to \infty.
\end{eqnarray*}
\end{proof}
We can construct $(KL)-$type dominated convergence theorem for the set-valued \emph{Kluv\'{a}nek-Lewis-Henstock integral} as follows:
\begin{thm}
Let $\mcM: \Sigma \to P_{wkc}(X)$ be a set-valued measure and let $f,~f_n: T \to \mathbb{R},~ n \in \mathbb{N}$ be $(HKL)\mcM-$integrable such that $\lim\limits_{n \to \infty}f_n=f|\mcM|-$a.e. If there exists a non negative $|\mcM|-$integrable function $g:T \to \mathbb{R}$ such that $|f_n(w)|\leq g(w)$ for all $n \in \mathbb{N}$ and $w \in T$ then 
\begin{eqnarray*}
\lim\limits_{n \to \infty}H((HK)\int_{T}f_n(w)d \mcM(w), (HK)\int_{T}f(w)d \mcM(w))=0.
\end{eqnarray*}
\end{thm}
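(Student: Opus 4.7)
The plan is to mimic the argument of the immediately preceding Vitali-type theorem, but to supply the convergence of $\int_T |f_n - f| \, d|\mathcal{M}|$ via the Dominated Convergence Theorem (Theorem \ref{bhanish1}) instead of via uniform integrability. First, I would invoke the $(HKL)\mathcal{M}$-integrability of both $f$ and the $f_n$ to obtain, for each $x^{\ast}\in X^{\ast}$, the support-function identities
\begin{eqnarray*}
\sigma(x^{\ast},(HKL)\textstyle\int_T f_n \, d\mathcal{M}) &=& (HK)\textstyle\int_T f_n \, d\sigma(x^{\ast},\mathcal{M}(\cdot)),\\
\sigma(x^{\ast},(HKL)\textstyle\int_T f \, d\mathcal{M}) &=& (HK)\textstyle\int_T f \, d\sigma(x^{\ast},\mathcal{M}(\cdot)),
\end{eqnarray*}
which convert the problem into a scalar estimate. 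Using the representation $H(A,B) = \sup_{\|x^{\ast}\|\leq 1} |\sigma(x^{\ast},A) - \sigma(x^{\ast},B)|$ for convex sets in $P_{wkc}(X)$, I would then bound the Hausdorff distance by $\sup_{\|x^{\ast}\|\leq 1}(HK)\int_T |f_n - f| \, d|\sigma(x^{\ast},\mathcal{M}(\cdot))|$.

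Next, I would reuse the comparison already established in the proof of Theorem \ref{may01}, namely $|\sigma(x^{\ast},\mathcal{M}(\cdot))|(A) \leq |\mathcal{M}|(A)$ for each $A \in \Sigma$ and each $x^{\ast}$ with $\|x^{\ast}\| \leq 1$, to dominate further by $(HK)\int_T |f_n - f| \, d|\mathcal{M}|(w)$. So everything reduces to proving that this last scalar integral tends to $0$.

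For that, I would apply Theorem \ref{bhanish1} on the scalar measure $|\mathcal{M}|$. The hypothesis $|f_n(w)| \leq g(w)$ for all $n$ and $w$, combined with the pointwise $|\mathcal{M}|$-a.e.\ convergence $f_n \to f$, gives $|f(w)| \leq g(w)$ and hence $|f_n(w)-f(w)| \leq 2g(w)$ $|\mathcal{M}|$-a.e.; since $g$ is $|\mathcal{M}|$-integrable, so is $2g$, providing an integrable majorant. The $|\mathcal{M}|$-a.e.\ convergence $f_n - f \to 0$ then lets Theorem \ref{bhanish1} conclude $\lim_n (HK)\int_T |f_n - f| \, d|\mathcal{M}|(w) = 0$, uniformly in the relevant sense, which chains back to deliver $\lim_n H((HKL)\int_T f_n \, d\mathcal{M}, (HKL)\int_T f \, d\mathcal{M}) = 0$.

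The main obstacle I expect is purely bookkeeping: ensuring that the inequality $|\sigma(x^{\ast},\mathcal{M}(\cdot))|(A) \leq |\mathcal{M}|(A)$ is legitimately available in this form (it was invoked inside the proof of Theorem \ref{may01}), and matching the two different integrator variations so that the passage from $\sup_{\|x^{\ast}\|\leq 1}(HK)\int_T |f_n-f|\, d|\sigma(x^{\ast},\mathcal{M}(\cdot))|$ to $(HK)\int_T|f_n-f|\,d|\mathcal{M}|$ is rigorous; once this reduction is in place, Theorem \ref{bhanish1} handles the convergence without further difficulty.
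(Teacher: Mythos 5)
Your proposal is correct and is essentially the argument the paper intends: the paper's own ``proof'' is nothing more than the citation ``similar to [Cai, Theorem 12]'', and your chain --- support-function identities, the Hausdorff-metric formula $H(A,B)=\sup_{\|x^\ast\|\le 1}|\sigma(x^\ast,A)-\sigma(x^\ast,B)|$, the domination $|\sigma(x^\ast,\mathcal{M}(\cdot))|(A)\le|\mathcal{M}|(A)$ lifted from the proof of the $|(HKL)\int_A f\,d\mathcal{M}|$ estimate --- is exactly the reduction carried out in the immediately preceding Vitali-type theorem. The only difference is the final step, where you correctly substitute the scalar dominated convergence theorem (with majorant $2g$, which is legitimate since $|f|\le g$ $|\mathcal{M}|$-a.e.\ follows from the pointwise bound and the a.e.\ convergence) for the uniform-integrability/Vitali argument, which is precisely what the hypotheses of this theorem are designed to permit.
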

\begin{proof}
Proof is similar as \cite[Theorem 12]{Cai}.
\end{proof}

\section{Declaration}
{\bf Funding:} Not Applicable, the research is not supported by any funding agency.\\
{\bf Conflict of Interest/Competing interests:} The authors declare that there are no  conflict of interest.\\
{\bf Availability of data and material:} The article does not contain any data for
analysis.\\
{\bf Code Availability:} Not Applicable.\\
{\bf Author's Contributions:} All the authors have equal contribution for the preparation of the article.

\end{document}